\def\pmod #1{\ ({\rm{mod}}\ #1)}
\def\Z{\Bbb Z}
\def\Q{\Bbb Q}
\def\F{\Bbb F}
\def\r{\right}
\def\bg{\bigg}
\def\({\bg(}
\def\){\bg)}
\def\sgn{{\rm sgn}}
\def\sgn{{\rm sgn}}
\def\Arg{{\rm Arg}}
\def\ve{\varepsilon}
\def\Ack{\medskip\noindent {\bf Acknowledgments}}
\theoremstyle{plain}
\newtheorem{theorem}{Theorem}
\newtheorem{lemma}{Lemma}
\theoremstyle{definition}
\theoremstyle{remark}
\newtheorem{remark}{Remark}
\begin{document}
 \baselineskip=17pt
\hbox{} {}
\medskip
\title[Jacobsthal sums and permutations of biquadratic residues]
{Jacobsthal sums and permutations of biquadratic residues}
\date{}
\author[Hai-Liang Wu and Yue-Feng She] {Hai-Liang Wu and Yue-Feng She}

\thanks{2020 {\it Mathematics Subject Classification}.
Primary 11A15; Secondary 05A05, 11R18.
\newline\indent {\it Keywords}. reciprocity laws, permutations, primitive roots, cyclotomic fields.
\newline \indent Supported by the National Natural Science
Foundation of China (Grant No. 11971222).}

\address {(Hai-Liang Wu) School of Science, Nanjing University of Posts and Telecommunications, Nanjing 210023, People's Republic of China}
\email{\tt whl.math@smail.nju.edu.cn}

\address {(Yue-Feng She) Department of Mathematics, Nanjing
University, Nanjing 210093, People's Republic of China}
\email{{\tt she.math@smail.nju.edu.cn}}

\begin{abstract}
Let $p\equiv1\pmod 4$ be a prime. In this paper, with the help of Jacobsthal sums over finite fields, we study some permutation problems involving biquadratic residues modulo $p$.
\end{abstract}
\maketitle
\section{Introduction}
\setcounter{lemma}{0}
\setcounter{theorem}{0}
\setcounter{corollary}{0}
\setcounter{remark}{0}
\setcounter{equation}{0}
\setcounter{conjecture}{0}
\setcounter{proposition}{0}
Let $p$ be an odd prime, and let $k$ be a positive integer with $k\mid p-1$. For every integer $a$ with
$p\nmid a$, we say that $a$ is a $k$-th power residue modulo $p$ if there is an $x\in\Z$ such that
$x^k\equiv a\pmod p$. When $k=2$, the law of quadratic reciprocity is one of the most beautiful results in number theory. Along this line, many mathematicians investigated the higher reciprocity laws.
For example, Gauss formulated and discovered the law of biquadratic reciprocity. Later G. Eisenstein gave
the first complete proofs of cubic and biquadratic reciprocity. In general, as an application of class field theory, we have the following elegant {\bf general reciprocity law for $k$-th power residues}
(cf. \cite[Theorem 8.3, p. 415]{N}).

Let $K$ be a number field containing the group of $k$-th roots of unity, and let $\mathcal{O}$ denote the ring of algebraic integers in $K$. Let $a,b\in K^{\times}$.
Suppose that the principle ideals $a\mathcal{O}$ and $b\mathcal{O}$ are relatively prime and that the ideals $ab\mathcal{O}$ and $k\mathcal{O}$ are prime to each other. Then we have
\begin{equation*}
\(\frac{a}{b}\)_k\(\frac{b}{a}\)_k^{-1}=\prod_{\mathfrak{p}\mid k\infty}\(\frac{a,b}{\mathfrak{p}}\)_k,
\end{equation*}
where $(\frac{}{\mathfrak{p}})_k$ is the $k$-th power residue symbol and
$(\frac{,}{\mathfrak{p}})_k$ is the Hilbert symbol. On the other hand, many mathematicians  focused on the rational reciprocity laws. For each $a\in\Z$ we define the {\bf rational $4$-th power residue symbol} as follows.
$$\(\frac{a}{p}\)_4=\begin{cases}0&\mbox{if}\ p\mid a,
\\1&\mbox{if}\ \text{{\it a} is a 4-th power residue modulo {\it p}} ,
\\-1&\mbox{otherwise}.\end{cases}$$
K. Burde \cite{B} in 1969 obtained the following elegant rational quartic reciprocity law.

Let $p=a^2+b^2$ and $q=A^2+B^2$ be odd primes with $a\equiv A\equiv1\pmod4$.
Suppose that $p$ is a quadratic residue modulo $q$. Then we have
\begin{equation*}
\(\frac{p}{q}\)_4\(\frac{q}{p}\)_4=(-1)^{\frac{q-1}{4}}\(\frac{aB-bA}{q}\)_2.
\end{equation*}
For more details on rational reciprocity laws the readers may consult \cite{L,LE}.

On the other hand, $k$-th power residues modulo $p$ have close relations with the permutations in the finite field $\F_p$.
We first introduce some previous work in this area. In \cite{Z} the well-known Zolotarev's Lemma shows that the Legendre symbol $(\frac{a}{p})$ coincides with the sign of the permutation of $\Z/p\Z$ induced by multiplication by $a$. With the help of this, G. Zolotarev obtained a simple proof of the law of quadratic reciprocity. Later W. Duke and K. Hopkins \cite{DH} extended G. Zolotarev's result to any finite groups. They also generalized the law of quadratic reciprocity to finite groups. In 2019, Z.-W. Sun posed the following permutation problems involving quadratic residues modulo $p$.
Let $c_1,\cdots,c_n$ be a sequence of all the $n=(p-1)/2$ quadratic residues among $1,\cdots,p-1$ in the ascending order. The sequence
\begin{align}\label{sequence A}
\{1^2\}_p,\cdots,\{n^2\}_p.
\end{align}
is a permutation $\sigma_p$ on
\begin{align}\label{sequence B}
c_1,\cdots,c_n,
\end{align}
where $\{a\}_p$ denotes the least nonnegative residue of $a$ modulo $p$.
Z.-W. Sun \cite{S} first investigated this permutation and he obtained that
$$\sgn(\sigma_p)=\begin{cases}1&\mbox{if}\ p\equiv 3\pmod8,\\(-1)^{(h(-p)+1)/2}&\mbox{if}\ p\equiv 7\pmod8,\end{cases}$$
where $h(-p)$ is the class number of $\Q(\sqrt{-p})$ and $\sgn(\sigma_p)$ is the sign of $\sigma_p$.
Inspired by Z.-W. Sun's work, the first author \cite{Wu} studied the following permutation problem.
Let $g\in\Z$ be a primitive root modulo $p$. Then the sequence
\begin{align}\label{sequence C}
\{g^2\}_p,\{g^4\}_p,\cdots,\{g^{p-1}\}_p
\end{align}
is a permutation of the sequence (\ref{sequence A}). The first author determined the sign of this permutation in the case $p\equiv1\pmod4$.

We consider the following problem involving biquadratic residues.
Let $p\equiv1\pmod8$ be a prime. Let $0<a_1<a_2<\cdots<a_{(p-1)/4}<p/2$ be all the quadratic residues modulo $p$ in the interval $(0,p/2)$.
Clearly $\{a_1^2\}_p,\{a_2^2\}_p,\cdots,\{a_{(p-1)/4}^2\}_p$ are exactly all the biquadratic residues modulo $p$ in the interval $(0,p)$.
Let $g\in\mathbb{Z}$ be a primitive root modulo $p$. Consider the following sequences.
\begin{align}\label{sequence D}
\{a_1^2\}_p,\{a_2^2\}_p,\cdots,\{a_{(p-1)/4}^2\}_p,
\end{align}
\begin{align}\label{sequence E}
\{g^4\}_p,\{g^{8}\}_p,\cdots,\{g^{p-1}\}_p.
\end{align}
\begin{align}\label{sequence F}
b_1,\cdots,b_{(p-1)/4},
\end{align}
where $b_1,\cdots,b_{(p-1)/4}$ are all the biquadratic residues modulo $p$ in the interval $(0,p)$ in ascending order.
Clearly (\ref{sequence E}) and (\ref{sequence F}) are permutations of (\ref{sequence D}) and we call these permutations $\tau_{p}(g)$ and $\rho_p$ respectively. The signs of $\tau_p(g)$ and $\rho_p$ are denoted by $\sgn(\tau_p(g))$ and $\sgn(\rho_p)$ respectively. In this paper, we mainly focus on some congruences and some permutation problems induced by $4$-th power residues modulo $p$. Before the
statement of our first result, we introduce the following notations. We let
$$\Omega_1:=\{0<x<p/2: x^{(p-1)/4}\equiv1\pmod p\},$$
and let
$$\Omega_{-1}:=\{0<x<p/2: x^{(p-1)/4}\equiv-1\pmod p\}.$$
With the above notations, we define
$A_p=\prod_{t\in\Omega_1}t$ and $B_p=\prod_{t\in\Omega_{-1}}t.$ We will see in Lemma \ref{lemma Np t} that
$$A_p^2\equiv(-1)^{\frac{p+7}{8}}\pmod p,$$ and
$$B_p^2\equiv (-1)^{\frac{p-1}{8}}\pmod p.$$
From this, when $p\equiv9\pmod {16}$, we let $\beta_p\in\{0,1\}$ satisfying
$$(-1)^{\beta_p}\equiv A_p\pmod p.$$
Let $g\in\Z$ be an arbitrary primitive root modulo $p$. When $p\equiv 1\pmod {16}$, we let $\delta_p(g)\in\{0,1\}$ satisfying
$$(-1)^{\delta_p(g)}\equiv A_p/g^{(p-1)/4}\pmod p.$$
For each prime $p$ with $p\equiv1\pmod4$, it is known that $p$ can be uniquely written as
$$p=a^2+4b^2$$
with $a\equiv-1\pmod 4$ and $b>0$. Clearly $(2b/a)^2\equiv-1\pmod p$. This shows that
$2b/a\pmod p$ is a primitive $4$-th root of unity in $(\Z/p\Z)^{\times}.$ With these notations, when
$p\equiv9\pmod{16}$, we let $\gamma_p\in\{0,1\}$ satisfying
$$(-1)^{\gamma_p}\equiv 2bB_p/a\pmod p.$$
For each primitive root $g$ modulo $p$, when $p\equiv1\pmod{16}$, we let $\mu_p(g)\in\{0,1\}$ satisfying
$$(-1)^{\mu_p(g)}\equiv 2bB_p/(ag^{\frac{p-1}{4}})\pmod p.$$
To state our result, we let
$$\lambda_p=\begin{cases}0&\mbox{if}\ U_p>0,\\1&\mbox{if}\ U_p<0,\end{cases}$$
where $U_p$ denotes the product
$$\prod_{1\le i<j\le(p-1)/4}\sin\frac{2\pi(a_j^2-a_i^2)}{p}.$$
We also let $\#S$ denote the cardinality of a finite set $S$ and use the symbol $\chi_4(\cdot)$ to denote the rational $4$-th power residue symbol.

Now we are in the position to state our first result.
\begin{theorem}\label{theorem A}
Let $p\equiv1\pmod8$ be a prime of the form $a^2+4b^2$ with $a\equiv-1\pmod 4$ and $b>0$. Then we have the following results.

{\rm (i)} If $p\equiv9\pmod{16}$, then $\sgn(\tau_p(g))$ is independent on the choice of $g$. And we have
$$\sgn(\tau_p(g))=\begin{cases}
(-1)^{\lambda_p+\beta_p+\frac{3p+11-2a+8b}{32}}&\mbox{if}\ \chi_4(2)=+1,
\\(-1)^{\lambda_p+\gamma_p+\frac{3p-21-2a+8b}{32}}&\mbox{if}\ \chi_4(2)=-1.\end{cases}$$

{\rm (ii)} If $p\equiv 1\pmod {16}$, then we have
$$\sgn(\tau_p(g))=\begin{cases}
(-1)^{\lambda_p+\delta_p(g)+\frac{3p-5-2a+8b}{32}}&
\mbox{if}\ \chi_4(2)=+1,
\\(-1)^{\lambda_p+\mu_p(g)+\frac{3p+27-2a+8b}{32}}&\mbox{if}\ \chi_4(2)=-1.\end{cases}$$
In this case, if we let
$$\mathcal{P}=\{0<g<p: \text{$g$ is a primitive root modulo $p$}\},$$
then we have
$$\#\{g\in\mathcal{P}:\sgn(\tau_p(g))=1\}=\#\{g\in\mathcal{P}:\sgn(\tau_p(g))=-1\}.$$
\end{theorem}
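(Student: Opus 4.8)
\noindent Write $n=(p-1)/4$. Since $\tau_p(g)$ carries $(\ref{sequence D})$ onto $(\ref{sequence E})$ and both list the set of biquadratic residues modulo $p$, the Vandermonde identity for two orderings of a finite set of integers yields the identity of signs of nonzero integers
$$\sgn(\tau_p(g))=\sgn\left(\prod_{1\le i<j\le n}\left(\{g^{4j}\}_p-\{g^{4i}\}_p\right)\right)\cdot\sgn\left(\prod_{1\le i<j\le n}\left(\{a_j^2\}_p-\{a_i^2\}_p\right)\right).$$
I would evaluate the two factors in turn. For the second, pass to a primitive $p$-th root of unity in $\C$ and use that $\sgn(m)=\sgn\sin\frac{\pi m}{p}$ for $0<|m|<p$ to convert differences of least residues into a product of sines; comparing this with $U_p$ costs only an explicit $\pm1$ coming from the floor functions $\lfloor a_i^2/p\rfloor$, so this factor equals $(-1)^{\lambda_p}$ times an explicitly computable sign. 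The first factor is a Vandermonde over the $n$-th roots of unity of $\F_p$ (namely the biquadratic residues, since $g^4$ has order $n$); its square equals $(-1)^{n-1}n^n$, and the real task is to pin down the correct square root together with a residual primitive-$4$th-root-of-unity factor. This is where Jacobsthal sums over $\F_p$ intervene: evaluating them in terms of the representation $p=a^2+4b^2$ fixes these factors, the dichotomy $\chi_4(2)=\pm1$ selecting which branch of the relevant Gauss-sum evaluation applies; the quantities $A_p,B_p$ and the congruences $A_p^2\equiv(-1)^{(p+7)/8}$, $B_p^2\equiv(-1)^{(p-1)/8}\pmod p$ identify the leftover $4$th root of unity (paired with the primitive $4$th root $2b/a$ when $\chi_4(2)=-1$), which is why the cases $p\equiv1$ and $p\equiv9\pmod{16}$ take different shapes and why $\beta_p,\gamma_p$, resp.\ $\delta_p(g),\mu_p(g)$, appear; the rational exponents in the statement (such as $\frac{3p+11-2a+8b}{32}$) collect the elementary sign bookkeeping from these steps.

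The dependence on $g$ I would extract structurally, once and for all. Fix a primitive root $g_0$ and write $g\equiv g_0^{\,r}\pmod p$ with $\gcd(r,p-1)=1$; then $\gcd(r,n)=1$ and $g^{4k}=(g_0^4)^{rk}$, so $(\ref{sequence E})$ for $g$ is the rearrangement of $(\ref{sequence E})$ for $g_0$ by the multiplication-by-$r$ permutation $m_r$ of $\Z/n\Z$, whence $\sgn(\tau_p(g))=\sgn(\tau_p(g_0))\cdot\sgn(m_r)$. Writing $n=2^sm$ with $m$ odd and using $\Z/n\Z\cong\Z/2^s\Z\times\Z/m\Z$, the odd part contributes $1$, so $\sgn(m_r)=\sgn(m_r\ \text{on}\ \Z/2^s\Z)$. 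If $p\equiv9\pmod{16}$ then $s=1$, so $\sgn(m_r)=1$ and $\sgn(\tau_p(g))$ is independent of $g$, which is part (i). If $p\equiv1\pmod{16}$ then $s\ge2$, and a short computation with the group $(\Z/2^s\Z)^{\times}$ gives $\sgn(m_r)=(-1)^{(r-1)/2}$, i.e.\ $\sgn(m_r)=1$ exactly when $r\equiv1\pmod4$.

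The final counting assertion is then immediate. As $g$ runs over $\mathcal P$, the exponent $r$ runs over a complete residue system for $(\Z/(p-1)\Z)^{\times}$, all of whose elements are odd; the involution $r\mapsto p-1-r$ has no fixed point on this system, preserves coprimality to $p-1$, and—since $4\mid p-1$—interchanges the classes $r\equiv1$ and $r\equiv3\pmod4$. Hence exactly $\varphi(p-1)/2$ primitive roots satisfy $\sgn(m_r)=1$ and $\varphi(p-1)/2$ satisfy $\sgn(m_r)=-1$, and by $\sgn(\tau_p(g))=\sgn(\tau_p(g_0))\sgn(m_r)$ the two sign classes are equinumerous. (One may also read this off the formula of (ii): every term there except $\delta_p(g)$, resp.\ $\mu_p(g)$, is independent of $g$, while $\delta_p(g)$ and $\mu_p(g)$—well defined because $A_p^2\equiv-1$ and $B_p^2\equiv1\pmod p$ when $p\equiv1\pmod{16}$—record whether $g^{(p-1)/4}$ equals a fixed primitive $4$th root of unity or its negative, which by the same involution happens for half the primitive roots each.) The main obstacle is the evaluation of the two Vandermonde products in $\F_p$—extracting the right square root of $(-1)^{n-1}n^n$, tracking the residual $4$th-root-of-unity factors against $A_p$ and $B_p$, and carrying all the $\pm$ signs and the $\chi_4(2)=\pm1$ split through to the stated closed forms; the extraction of the $g$-dependence and the concluding equidistribution of signs are routine once the identity $\sgn(\tau_p(g))=\sgn(\tau_p(g_0))\sgn(m_r)$ is in hand.
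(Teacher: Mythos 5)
Your reduction of the problem splits into two halves, and they fare very differently. The structural half (paragraphs two and three) is correct and is a nice alternative to the paper's argument: writing $g\equiv g_0^r$ and showing $\sgn(\tau_p(g))=\sgn(\tau_p(g_0))\sgn(m_r)$, with $\sgn(m_r)=1$ when $\nu_2(n)=1$ and $\sgn(m_r)=(-1)^{(r-1)/2}$ when $\nu_2(n)\ge 2$, does give the $g$-independence in part (i) and the equidistribution claim in part (ii); the paper instead gets the latter by computing $\sgn(\tau_p(g))\cdot\sgn(\tau_p(g^{-1}))\equiv G(g)/G(g^{-1})\equiv g^{(p-1)/2}\equiv-1\pmod p$, which amounts to the same involution $r\mapsto -r$.

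The evaluation of the sign itself, however, has a genuine gap. You factor $\sgn(\tau_p(g))$ as a product of two \emph{real} signs and then propose to evaluate one factor as a real sign (via sines) and the other as a residue mod $p$ (a Vandermonde over the $n$-th roots of unity of $\F_p$). These cannot be multiplied: the real sign of the integer $\prod_{i<j}(\{g^{4j}\}_p-\{g^{4i}\}_p)$ is not determined by its residue mod $p$. The workable move, which is the paper's, is to compute the \emph{ratio} $W_p/\prod_{i<j}(g^{4j}-g^{4i})$ entirely modulo $p$ — legitimate because the ratio equals $\pm1$ — where $W_p=\prod_{i<j}(a_j^2-a_i^2)$. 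Moreover, your claim that the $a_i^2$-factor equals $(-1)^{\lambda_p}$ times ``an explicit $\pm1$ coming from the floor functions'' is false: that real sign is exactly $\sgn(\rho_p)$, which by Theorem \ref{theorem B} equals $(-1)^{\lambda_p+\ve_p}$, and $\ve_p$ is a nontrivial lattice-point count that does not occur in Theorem \ref{theorem A} and cannot be cancelled by anything your first factor produces. Finally, the roles of the tools are reversed in your sketch: the Jacobsthal sums enter through the counts $N_p(t)$ of representations $y^2-x^2\equiv t\pmod p$ by quadratic residues $x,y\in(0,p/2)$ (Lemmas \ref{lemma phi 2 m}--\ref{lemma congruence of product ai}), which is where $\lambda_p$, $A_p$, $B_p$, $a$ and $b$ actually arise in the congruence for $W_p\pmod p$; the $g$-Vandermonde is handled separately by an exact evaluation of $P(\zeta)$ at $\zeta=e^{2\pi\sqrt{-1}/(p-1)}$ together with the splitting of $\Phi_{p-1}(x)\pmod p$, with no Jacobsthal sums involved. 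Without the $N_p(t)$ computation your argument has no route to the main terms of the stated formulas.
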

To state our second result, we let
$$\ve_p=\#\bigg\{(x,y): 0<x,y,x+y<p/2,\(\frac{x}{p}\)_4=\(\frac{y}{p}\)_4=1\bigg\}.$$
\begin{theorem}\label{theorem B}
Let $p\equiv1\pmod8$ be a prime. Then we have
$$\sgn(\rho_p)=(-1)^{\lambda_p+\ve_p}.$$
\end{theorem}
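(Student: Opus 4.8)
The plan is to factor $\sgn(\rho_p)$ through an auxiliary product of sines and reduce the count of sign changes to a combinatorial count of "inversions" that, up to the known contribution $\lambda_p$, is governed by $\ve_p$. First I would recall the standard device (already used implicitly via $U_p$ and $\lambda_p$): for any sequence $d_1,\dots,d_m$ of distinct residues in $(0,p)$, the sign of the permutation sorting $\{d_1\}_p,\dots,\{d_m\}_p$ into ascending order equals $\sgn\prod_{i<j}\sin\frac{\pi(d_j-d_i)}{p}$ (or the analogous product with $2\pi$, matching the normalization of $U_p$), because $\sin\frac{\pi(x-y)}{p}$ is an odd function detecting whether $x>y$ in $(0,p)$. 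Applying this to the sequence (\ref{sequence D}), we get $\sgn(\rho_p)=\sgn\prod_{1\le i<j\le(p-1)/4}\sin\frac{2\pi(\{a_j^2\}_p-\{a_i^2\}_p)}{p}$; but since $\sin$ is $2\pi/p$-periodic in this argument, $\{a_j^2\}_p-\{a_i^2\}_p$ may be replaced by $a_j^2-a_i^2$, which is exactly $U_p$. Hence on its face $\sgn(\rho_p)=\lambda_p$-term alone — so the real content is that this naive identification is \emph{not} what $\rho_p$ is: $\rho_p$ is the permutation from (\ref{sequence D}) to (\ref{sequence F}), i.e. the permutation sending $\{a_i^2\}_p$ (in the order $i=1,\dots,(p-1)/4$) to the $i$-th smallest biquadratic residue. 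I would therefore write $\rho_p = \pi_2 \circ \pi_1^{-1}$ where $\pi_1$ sorts $\{a_1^2\}_p,\dots$ and $\pi_2$ is the identity reindexing, so that $\sgn(\rho_p)$ is the sign of the permutation of $\{1,\dots,(p-1)/4\}$ defined by $i\mapsto(\text{rank of }\{a_i^2\}_p\text{ among the }b_k)$.

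Next I would compute that sign as $(-1)^{N}$ where $N=\#\{(i,j): i<j,\ \{a_i^2\}_p>\{a_j^2\}_p\}$ is the number of inversions. The key step is to rewrite the inversion condition $\{a_i^2\}_p > \{a_j^2\}_p$ (for $i<j$, i.e. $a_i<a_j<p/2$) intrinsically. Since each $a_i\in(0,p/2)$ is a quadratic residue, $a_i^2$ is a biquadratic residue, and one needs to understand when squaring reverses order modulo $p$. Writing $a_j^2 - a_i^2 = (a_j-a_i)(a_j+a_i)$ with $0<a_j-a_i<p/2$ and $0<a_j+a_i<p$, the residue $\{a_j^2-a_i^2\}_p$ equals $a_j^2-a_i^2$ if $a_j^2-a_i^2<p$ and $a_j^2-a_i^2-p$ (or a larger shift) otherwise; more usefully, $\{a_i^2\}_p>\{a_j^2\}_p$ iff $\{a_j^2-a_i^2\}_p > p/2$, i.e. iff $a_j^2-a_i^2$ lies in an "upper half" window. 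I would relate this to the representation $a_j+a_i$, $a_j-a_i$ and use the substitution $x=a_j-a_i$, $y=a_i$ (so $x+y=a_j$), with $0<x,y,x+y<p/2$, to convert $N$ into a count of pairs $(x,y)$ with $0<x,y,x+y<p/2$, both $y$ and $x+y$ quadratic residues, satisfying a half-interval condition on $xy(\dots)$; after peeling off the part of this count already absorbed in $U_p$ (the "geometric" inversions of the sorted sequence), the residual parity is precisely $\ve_p \pmod 2$, using the fact that $x$ a quadratic residue with $0<x<p/2$ is equivalent (via $\chi_4$) to $x$ being in $\Omega_1\cup\Omega_{-1}$-type sets, and that $(\frac{x}{p})_4=(\frac{y}{p})_4=1$ picks out exactly the biquadratic pairs.

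Concretely, the cleanest route is: factor $\sin\frac{2\pi(a_j^2-a_i^2)}{p} = \sin\frac{2\pi(a_j-a_i)(a_j+a_i)}{p}$ and compare it with $\sin\frac{2\pi(a_j-a_i)}{p}\cdot(\text{sign correction})$; over all pairs this splits $U_p$ as a product over differences times a product whose sign is $(-1)^{\ve_p}$. Then $\sgn(\rho_p)=\sgn(U_p)\cdot(-1)^{\ve_p}=(-1)^{\lambda_p+\ve_p}$, which is the claim. The main obstacle I expect is the bookkeeping in the previous paragraph — making the change of variables $(i,j)\mapsto(x,y)=(a_j-a_i,a_i)$ genuinely bijective onto the index set of $\ve_p$ and verifying that the half-interval ("$>p/2$") conditions on $\{a_j^2-a_i^2\}_p$ match the conditions $0<x+y<p/2$ together with $(\frac{x}{p})_4=(\frac{y}{p})_4=1$, rather than some off-by-a-sign variant. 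Handling the boundary/degenerate cases (when $a_j^2-a_i^2$ crosses a multiple of $p$, and the behavior of $\chi_4$ on $2$, which may enter through $a_j+a_i$ vs.\ $(a_j+a_i)/2$) is where care is needed; once that dictionary is pinned down, the sign count collapses immediately to $(-1)^{\lambda_p+\ve_p}$.
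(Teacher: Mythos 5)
There is a genuine gap, and it sits at the pivot of your reduction. You claim that for $i<j$ the inversion condition $\{a_i^2\}_p>\{a_j^2\}_p$ is equivalent to $\{a_j^2-a_i^2\}_p>p/2$. This is false: whether a pair is inverted depends on where \emph{both} residues $\{a_i^2\}_p$ and $\{a_j^2\}_p$ lie, not only on their difference modulo $p$ (e.g.\ with $p=17$, the residues $1$ and $10$ are in increasing order even though their difference $9$ exceeds $p/2$). The same conflation appears at the start: the sign of the permutation sorting residues $c_1,\dots,c_n\in(0,p)$ is $\sgn\prod_{i<j}\sin\bigl(\pi(c_j-c_i)/p\bigr)$, with $\pi$, and only the $2\pi$-normalized product is invariant under replacing $\{a_j^2\}_p-\{a_i^2\}_p$ by $a_j^2-a_i^2$; so $\sgn(\rho_p)$ is \emph{not} "on its face" equal to $\sgn(U_p)$, and the discrepancy is not explained by a reinterpretation of $\rho_p$ ($\rho_p$ really is the sorting permutation of sequence (\ref{sequence D})). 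The correct starting identity, which your sine bookkeeping is circling around, is $\sgn(\rho_p)\cdot\prod_{i<j}(b_j-b_i)=\prod_{i<j}(\{a_j^2\}_p-\{a_i^2\}_p)\equiv\prod_{i<j}(a_j^2-a_i^2)\pmod p$, i.e.\ $\sgn(\rho_p)\equiv W_p/S_p\pmod p$.

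The second, larger gap is that your concluding step --- "over all pairs this splits $U_p$ as a product over differences times a product whose sign is $(-1)^{\varepsilon_p}$" --- is precisely the assertion of the theorem, not a proof of it. The proposed change of variables $(x,y)=(a_j-a_i,\,a_i)$ lands on pairs in which $y$ and $x+y$ are quadratic residues in $(0,p/2)$, whereas $\varepsilon_p$ counts pairs in which $x$ and $y$ are both \emph{fourth-power} residues with $x+y<p/2$; no bijection between these index sets is exhibited, and none is evident. Note also that your sketch uses no arithmetic input whatsoever, while the paper's proof computes $S_p=\prod_{i<j}(b_j-b_i)$ and $W_p=\prod_{i<j}(a_j^2-a_i^2)$ modulo $p$ separately, via the Jacobsthal-sum evaluations of Lemma \ref{lemma phi 2 m} and the counts $N_p(t)$ and $r_m^{++}$; both congruences carry factors depending on the representation $p=a^2+4b^2$, and the identity $\sgn(\rho_p)=(-1)^{\lambda_p+\varepsilon_p}$ emerges only because those factors cancel in the quotient $S_p/W_p$. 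A purely combinatorial argument may exist, but as written your proposal neither supplies the dictionary it promises nor avoids the false equivalence on which it rests.
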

\begin{remark}
It is clear that $\sgn(\rho_p)=1$ if and only if
$$\prod_{1\le i<j\le(p-1)/4}(\{a_j^2\}_p-\{a_i^2\}_p)>0.$$
From this, we immediately obtain
$$\#\big\{(i,j): 1\le i<j\le\frac{p-1}{4},\{a_i^2\}_p>\{a_j^2\}_p\big\}\equiv \lambda_p+\ve_p\pmod2.$$
\end{remark}

We will prove Theorem \ref{theorem A} in Section 2. The proof of Theorem \ref{theorem B} will be given in Section 3.
\maketitle
\section{Proof of Theorem \ref{theorem A}}
\setcounter{lemma}{0}
\setcounter{theorem}{0}
\setcounter{corollary}{0}
\setcounter{remark}{0}
\setcounter{equation}{0}
\setcounter{conjecture}{0}
We first introduce the definition of Jacobsthal sums. Readers may see \cite{BEW} for details.

Let $m\in\Z$ with $p\nmid m$. For all $k\in\Z^{+}$,
the Jacobsthal sums $\phi_k(m)$ and $\psi_k(m)$ are defined by
\begin{align*}
\phi_k(m)&=\sum_{x=1}^{p-1}\(\frac{x}{p}\)\(\frac{x^k+m}{p}\),
\\\psi_k(m)&=\sum_{x=1}^{p-1}\(\frac{x^k+m}{p}\).
\end{align*}
We need the following result concerning Jacobsthal sums.
\begin{lemma}\label{lemma phi 2 m}
Let $p\equiv1\pmod4$ be a prime of the form $a^2+4b^2$ with $a\equiv-1\pmod p$ and $b>0$. For all $m\in\Z$ with $p\nmid m$ we have

{\rm (i)} {\rm (\cite[Theorem 6.2.9]{BEW})} The explicit value of $\phi_2(m)$ is given by
$$\phi_2(m)=\begin{cases}2a&\mbox{if}\ m^{\frac{p-1}{4}}\equiv1\pmod p,\\4b&\mbox{if}\
m^{\frac{p-1}{4}}\equiv2b/a\pmod p,\\-2a&\mbox{if}\ m^{\frac{p-1}{4}}\equiv-1\pmod p,\\
-4b&\mbox{if}\ m^{\frac{p-1}{4}}\equiv-2b/a\pmod p.\end{cases}$$

{\rm (ii)} {\rm (\cite[Theorem 6.1.13]{BEW})} The following equality holds.
$$\psi_{2k}(m)=\psi_k(m)+\phi_k(m).$$
\end{lemma}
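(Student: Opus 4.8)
\medskip
\noindent\textit{Proof plan.} Both statements are classical facts about Jacobsthal sums (this is why they are quoted from \cite{BEW}), but let me indicate how the arguments run. Part (ii) is the easy half. The squaring map sends $\{1,\dots,p-1\}$ two-to-one onto the nonzero quadratic residues, and $1+(\tfrac{y}{p})$ equals $2$ on those residues and $0$ on the nonresidues, so writing $x^{2k}=(x^{2})^{k}$ gives
$$\psi_{2k}(m)=\sum_{x=1}^{p-1}\(\f{(x^{2})^{k}+m}{p}\)=\sum_{y=1}^{p-1}\l(1+\(\f{y}{p}\)\r)\(\f{y^{k}+m}{p}\),$$
and the right-hand side is $\psi_k(m)+\phi_k(m)$ by the definitions. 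So the substance is part (i).

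For part (i) the plan is to convert $\phi_2(m)$ into a Jacobi sum of a quartic character. Fix a character $\chi$ of $\F_p^{\times}$ of order $4$, so $\chi^{2}=(\tfrac{\cdot}{p})$. The substitution $x\mapsto tx$ gives $\phi_2(m)=(\tfrac{t}{p})\,\phi_2(mt^{-2})$ for all $t\in\F_p^{\times}$; taking $t$ a quadratic residue shows $\phi_2(m)$ depends only on the coset of $m$ modulo fourth powers, that is, only on $m^{(p-1)/4}\bmod p$, so at most the four listed values occur. Since $(\tfrac{x}{p})=\chi(x^{2})$, grouping $\phi_2(m)=\sum_{x}\chi(x^{2})(\tfrac{x^{2}+m}{p})$ by $t=x^{2}$ (each nonzero quadratic residue $t$ arising from two $x$) gives $\phi_2(m)=2\sum_{t}\chi(t)(\tfrac{t+m}{p})$, summed over nonzero quadratic residues $t$. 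Detecting ``$t$ is a quadratic residue'' via $\tfrac12(1+(\tfrac{t}{p}))$ and using $\chi\cdot\chi^{2}=\chi^{3}=\overline{\chi}$ collapses this to $\phi_2(m)=S+\overline{S}=2\operatorname{Re}S$ with $S=\sum_{t}\chi(t)(\tfrac{t+m}{p})$. Finally $t\mapsto-mt$ turns $S$ into $\chi(-m)(\tfrac{m}{p})\,J$, where $J:=J(\chi,\chi^{2})=\sum_{t}\chi(t)(\tfrac{1-t}{p})$; since $(\tfrac{m}{p})=\chi(m)^{2}$ and $\chi(-m)=\chi(-1)\chi(m)$, one reaches
$$\phi_2(m)=2\,\chi(-1)\operatorname{Re}\!\l(\overline{\chi}(m)\,J\r).$$

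It then remains to evaluate $J$ and to sort the four resulting numbers into the four cases. Since $\chi^{3}$ is nontrivial, $|J|^{2}=p$; as $J\in\Z[i]$ we may write $J=u+vi$ with $u^{2}+v^{2}=p$, and a congruence for $J$ modulo $2(1+i)$ makes $u$ odd and $v$ even, so (by uniqueness of the representation of $p$ as a sum of two squares) $u=\pm a$ and $v=\pm2b$. Letting $\overline{\chi}(m)$ run through $\{1,i,-1,-i\}$ makes $\operatorname{Re}(\overline{\chi}(m)J)$ run through $\{u,-v,-u,v\}$, which yields exactly the value set $\{2a,-2a,4b,-4b\}$. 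The delicate part, which I expect to absorb most of the effort, is the sign bookkeeping: pinning down the signs of $u$ and $v$ from finer congruences for $J$ in $\Z[i]$ (sensitive to $p\bmod8$) together with the normalizations $a\equiv-1\pmod4$ and $b>0$, and matching each value of $\chi(m)\in\{1,i,-1,-i\}$ with the corresponding residue $m^{(p-1)/4}\equiv1,\,2b/a,\,-1,\,-2b/a\pmod p$ via $\chi(m)\equiv m^{(p-1)/4}\pmod{\mathfrak p}$, where $\mathfrak p\mid p$ is the prime of $\Z[i]$ attached to $\chi$ and $i\bmod\mathfrak p$ is one of $\pm2b/a$. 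This bookkeeping is precisely the computation in \cite[Chapter~6]{BEW}, which I would ultimately cite; everything around it is formal manipulation of character sums.
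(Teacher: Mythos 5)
The paper offers no proof of this lemma at all: both parts are quoted verbatim from Berndt--Evans--Williams (Theorems 6.2.9 and 6.1.13 of \cite{BEW}), so there is no internal argument to compare yours against. Your proof of part (ii) is complete and correct as written. Your plan for part (i) is the standard Jacobi-sum derivation, and every formal step checks out: the reduction showing $\phi_2(m)$ depends only on $m$ modulo fourth powers, the identity $\phi_2(m)=2\chi(-1)\operatorname{Re}(\overline{\chi}(m)J)$ with $J=J(\chi,\chi^2)$, and the facts $|J|^2=p$, $J=u+vi\in\Z[i]$ with $u^2+v^2=p$. The one substantive gap is the one you flag yourself: pinning down the sign normalizations --- that $u$ is odd with $u\equiv-1\pmod 4$ (hence $u=a$ under the paper's convention $a\equiv-1\pmod 4$), and the sign of $v$ relative to the chosen prime $\mathfrak{p}$ above $p$ --- which is exactly the content of the cited theorem, so your write-up ends in the same place as the paper's (a citation to \cite{BEW}), just with more of the surrounding scaffolding made explicit. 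If you do expand the bookkeeping, one point needs care: the matching of $\chi(m)=i$ with $m^{(p-1)/4}\equiv 2b/a$ rather than $-2b/a$ depends on which of the two primes above $p$ defines $\chi$, while $\phi_2(m)$ does not; you must verify that switching $\mathfrak{p}$ flips the sign of $v$ and swaps the two middle cases consistently, so that the final four-case formula is independent of the choice.
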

To state the following Lemma, for all $t\in\Z$ we define
$$N_p(t)=\#\big\{(x,y): 0<x<y<\frac{p}{2},\(\frac{x}{p}\)=\(\frac{y}{p}\)=1,y^2-x^2\equiv t\pmod p\big\}.$$
We have the following result.
\begin{lemma}\label{lemma Np t}
Let $p\equiv1\pmod8$ be a prime of the form $a^2+4b^2$ with $a\equiv-1\pmod p$ and $b>0$. For all $t\in\Z$ with $p\nmid t$, we have
$$N_p(t)+N_p(-t)=\begin{cases}(p-3-2a+2\phi_2(t))/16&\mbox{if}\ (\frac{t}{p})=-1,\\
(p-7+2a+2\phi_2(t)-4\chi_4(t))/16&\mbox{if}\ (\frac{t}{p})=1.\end{cases}$$
\end{lemma}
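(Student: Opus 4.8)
\noindent\emph{Proof strategy.} The plan is to turn $N_p(t)+N_p(-t)$ into a difference-set count for the biquadratic residues and then expand it with quartic characters, feeding in Lemma~\ref{lemma phi 2 m}. First, splitting pairs $(x,y)$ according to whether $x<y$ or $x>y$, one sees that $N_p(t)+N_p(-t)$ is the number of ordered pairs $(u,v)$ with $0<u,v<p/2$, $\left(\frac{u}{p}\right)=\left(\frac{v}{p}\right)=1$ and $v^{2}\equiv u^{2}+t\pmod{p}$. For a fixed admissible $u$ with $u^{2}+t\not\equiv0$, the congruence $v^{2}\equiv u^{2}+t$ is solvable exactly when $u^{2}+t$ is a quadratic residue, in which case it has two roots $\pm v_{0}$, precisely one of which lies in $(0,p/2)$; since $\left(\frac{v_{0}}{p}\right)\equiv(v_{0}^{2})^{(p-1)/4}\equiv(u^{2}+t)^{(p-1)/4}\pmod{p}$, that root is itself a quadratic residue if and only if $u^{2}+t$ is a biquadratic residue (the case $u^{2}+t\equiv0$ contributes nothing). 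Writing $B$ for the set of biquadratic residues modulo $p$ and using that, since $p\equiv1\pmod{4}$, the map $u\mapsto u^{2}$ is a bijection from the quadratic residues in $(0,p/2)$ onto $B$, this yields $N_p(t)+N_p(-t)=\#\{U\in B:\ U+t\in B\}$.

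Next, fix a quartic character $\chi$ modulo $p$, so that $\chi^{2}=\left(\frac{\cdot}{p}\right)$, and detect membership in $B$ by $\mathbf{1}_B(z)=\tfrac14\sum_{j=0}^{3}\chi^{j}(z)$ with the convention $\chi^{j}(0):=0$. Then $16\,(N_p(t)+N_p(-t))=\sum_{i,j=0}^{3}S_{ij}$, where $S_{ij}:=\sum_{U}\chi^{i}(U)\chi^{j}(U+t)$. Throughout one uses that $p\equiv1\pmod{8}$ forces $\chi(-1)=\left(\frac{2}{p}\right)=1$ and $(-1)^{(p-1)/4}\equiv1\pmod{p}$. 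The boundary terms are elementary: $S_{00}=p-2$, and $S_{0j}=-\chi^{j}(t)$, $S_{i0}=-\chi^{i}(-t)=-\chi^{i}(t)$ for $i,j\in\{1,2,3\}$. For the nine inner terms the key device is the identity $(\chi+\chi^{3})(z)=\sum_{w^{2}=z}\left(\frac{w}{p}\right)$, which lets one pair the block $\{(i,j):1\le i,j\le3\}$ as $\{S_{11},S_{33}\}$, $\{S_{12},S_{32}\}$, $\{S_{21},S_{23}\}$, $\{S_{13},S_{31}\}$, $\{S_{22}\}$, with each group telescoping to a Jacobsthal sum: $S_{22}=\sum_U\left(\frac{U^{2}+tU}{p}\right)=-1$; $S_{13}+S_{31}=\sum_{s\ne1}(\chi+\chi^{3})(s)=-2$; $S_{12}+S_{32}=\sum_U(\chi+\chi^{3})(U)\left(\frac{U+t}{p}\right)=\sum_{w}\left(\frac{w}{p}\right)\left(\frac{w^{2}+t}{p}\right)=\phi_2(t)$; $S_{21}+S_{23}=\sum_U\left(\frac{U}{p}\right)(\chi+\chi^{3})(U+t)=\sum_{w}\left(\frac{w(w^{2}-t)}{p}\right)=\phi_2(-t)=\phi_2(t)$, the last step because $(-t)^{(p-1)/4}\equiv t^{(p-1)/4}\pmod{p}$ and $\phi_2$ depends only on this residue by Lemma~\ref{lemma phi 2 m}(i); and $S_{11}+S_{33}=\sum_U(\chi+\chi^{3})(U^{2}+tU)=\sum_{w^{2}=U^{2}+tU}\left(\frac{w}{p}\right)$. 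For this last sum I would complete the square, $w^{2}=\left(U+\tfrac{t}{2}\right)^{2}-\tfrac{t^{2}}{4}$, and run through the rational parametrization $r=\left(U+\tfrac{t}{2}\right)-w\in(\Z/p\Z)^{\times}$ of the conic $\left(U+\tfrac{t}{2}\right)^{2}-w^{2}=t^{2}/4$, for which $\left(\frac{w}{p}\right)=\left(\frac{r(t^{2}-4r^{2})}{p}\right)$; hence $S_{11}+S_{33}=\sum_{r}\left(\frac{r(t^{2}-4r^{2})}{p}\right)=\phi_2(-t^{2})=2a\left(\frac{t}{p}\right)$, again by Lemma~\ref{lemma phi 2 m}(i) since $(-t^{2})^{(p-1)/4}\equiv\left(\frac{t}{p}\right)\pmod{p}$.

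Adding the contributions and using $\sum_{j=1}^{3}\chi^{j}(t)=1+2\chi_4(t)$ (which is just $\chi_4(t)=2\mathbf{1}_B(t)-1$), one obtains
\[
16\,(N_p(t)+N_p(-t))=p-7-4\chi_4(t)+2a\left(\frac{t}{p}\right)+2\phi_2(t).
\]
If $\left(\frac{t}{p}\right)=-1$ then $\chi_4(t)=-1$ and the right-hand side equals $p-3-2a+2\phi_2(t)$; if $\left(\frac{t}{p}\right)=1$ it is $p-7+2a+2\phi_2(t)-4\chi_4(t)$. Dividing by $16$ gives the two cases of the Lemma.

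The step I expect to need the most care is the bookkeeping for the inner $3\times3$ block of the $S_{ij}$: one has to hit on exactly the above pairings so that every group collapses (through $\chi+\chi^{3}$) to an honest Jacobsthal sum, which is what makes the argument self-contained and frees it from explicit evaluations of quartic Jacobi sums. The identification $S_{11}+S_{33}=\phi_2(-t^{2})$ via the conic substitution, together with the identities $\chi(-1)=\left(\frac{2}{p}\right)=(-1)^{(p-1)/4}=1$ supplied by $p\equiv1\pmod{8}$, is the crux.
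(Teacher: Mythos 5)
Your proof is correct --- I checked the reduction to $\#\{U\in B:\ U+t\in B\}$, each of the sixteen sums $S_{ij}$, and the final totals, which reproduce both cases of the lemma --- but it takes a genuinely different route from the paper's. The paper never leaves the two-variable count: it writes $N_p(t)+N_p(-t)=n_p(t)/4$ with $n_p(t)$ the number of pairs $(x,y)$ of quadratic residues in $(0,p)$ satisfying $y^2-x^2\equiv t\pmod p$, expands the indicator using only Legendre symbols and the square-detector $1-(\tfrac{\cdot}{p})^2$, and evaluates four sums $S_1,\dots,S_4$; the hardest one is converted by the linear substitution $(x,y)\mapsto(u,v)$ into $\psi_4(-t^2)$ and then evaluated via the identity $\psi_{2k}=\psi_k+\phi_k$ of Lemma \ref{lemma phi 2 m}(ii). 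You instead first collapse the count to the one-variable, cyclotomic-number-type quantity $\#\{U\in B:\ U+t\in B\}$ via the bijection $u\mapsto u^2$ from quadratic residues in $(0,p/2)$ onto the biquadratic residues, then expand with a genuine quartic character and regroup the inner $3\times 3$ block through $(\chi+\chi^3)(z)=\sum_{w^2=z}(\tfrac{w}{p})$; your hardest term $S_{11}+S_{33}$ is handled by a conic parametrization rather than a linear change of variables, landing on $\phi_2(-t^2)=2a(\tfrac{t}{p})$ directly, while the $\psi_2(t^2)=-2$ correction that the paper needs is absorbed in your bookkeeping by $S_{22}$ and $S_{13}+S_{31}$. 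What your approach buys is a cleaner combinatorial starting point and independence from Lemma \ref{lemma phi 2 m}(ii); what the paper's buys is that it works entirely with $\{0,\pm1\}$-valued symbols and has only four sums to track. Both arguments ultimately rest on the same input, the evaluation of $\phi_2$ in Lemma \ref{lemma phi 2 m}(i), and the hypotheses $p\equiv1\pmod 8$ (so that $\chi(-1)=(\tfrac{2}{p})=1$) are used in the same places.
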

\begin{proof}
It is clear that
$$N_p(-t)=\#\big\{(x,y):0<y<x<\frac{p}{2},\(\frac{x}{p}\)=\(\frac{y}{p}\)=1,y^2-x^2\equiv t\pmod p\big\}$$
and hence we see that $N_p(t)+N_p(-t)$ is equal to $n_p(t)/4$, where
$$n_p(t)=\#\big\{(x,y): 0<x,y<p,\(\frac{x}{p}\)=\(\frac{y}{p}\)=1,y^2-x^2\equiv t\pmod p\big\}.$$
It follows from definition that
$$n_p(t)=\frac{1}{4}\sum_{x=1}^{p-1}\sum_{y=1}^{p-1}
\(1+\(\frac{x}{p}\)\)\(1+\(\frac{y}{p}\)\)\(1-\(\frac{y^2-x^2-t}{p}\)^2\).$$
To obtain $n_p(t)$, we consider the following sums respectively. We first consider
$$S_1=\sum_{x=1}^{p-1}\sum_{y=1}^{p-1}\(1-\(\frac{y^2-x^2-t}{p}\)^2\).$$
Clearly
$$S_1=\#\big\{(x,y): 0<x,y<p, y^2-x^2\equiv t\pmod p\big\}=p-3-2\(\frac{t}{p}\).$$
We next consider the sums
$$S_2=\sum_{x=1}^{p-1}\sum_{y=1}^{p-1}\(\frac{x}{p}\)\(1-\(\frac{y^2-x^2-t}{p}\)^2\),$$
and
$$S_3=\sum_{x=1}^{p-1}\sum_{y=1}^{p-1}\(\frac{y}{p}\)\(1-\(\frac{y^2-x^2-t}{p}\)^2\).$$
Suppose first that $t$ is a quadratic non-residue modulo $p$. Then we have
\begin{align*}
S_2&=\sum_{x=1}^{p-1}\(\frac{x}{p}\)\sum_{y=1}^{p-1}\(1-\(\frac{y^2-(x^2+t)}{p}\)^2\)
=\sum_{x=1}^{p-1}\(\frac{x}{p}\)\(1+\(\frac{x^2+t}{p}\)\)
\\&=\phi_2(t).
\end{align*}
Suppose now that $-t\equiv t_0^2\pmod p$ for some $t_0\in\Z$. Then we have
\begin{align*}
S_2&=\sum_{x\not\equiv\pm t_0\pmod p}\(\frac{x}{p}\)\sum_{y=1}^{p-1}\(1-\(\frac{y^2-(x^2+t)}{p}\)^2\)
\\&=\sum_{x\not\equiv\pm t_0\pmod p}\(\frac{x}{p}\)\(1+\(\frac{x^2+t}{p}\)\)=\phi_2(t)-2\chi_4(t).
\end{align*}
The last equality follows from
$$\(\frac{t_0}{p}\)=\chi_4(-t)=\chi_4(t).$$
With the essentially same method, we can also get the explicit value of $S_3$ and obtain
$$S_2=S_3=\begin{cases}\phi_2(t)&\mbox{if}\ (\frac{t}{p})=-1,
\\\phi_2(t)-2\chi_4(t)&\mbox{if}\ (\frac{t}{p})=1.\end{cases}$$
Now we consider the sum
$$S_4=\sum_{x=0}^{p-1}\sum_{y=0}^{p-1}\(\frac{xy}{p}\)\(1-\(\frac{y^2-x^2-t}{p}\)^2\).$$
Let
$$\left(\begin{matrix}x\\y \end{matrix}\right)=\left(\begin{matrix}-1/2 & 1/2\\ 1/2 & 1/2\end{matrix}\right)\left(\begin{matrix}u\\v\end{matrix}\right).$$
Clearly this is a $\F_p$-linear bijection from $\F_p\times\F_p$ onto $\F_p\times\F_p$. Hence
\begin{align*}
S_4&=\sum_{u=0}^{p-1}\sum_{v=0}^{p-1}\(\frac{u^2-v^2}{p}\)\(1-\(\frac{uv-t}{p}\)^2\)
\\&=\sum_{u=1}^{p-1}\(\frac{u^2-t^2/u^2}{p}\)=\sum_{u=1}^{p-1}\(\frac{u^4-t^2}{p}\)=\psi_{4}(-t^2).
\end{align*}
By Lemma \ref{lemma phi 2 m} we have
\begin{align*}
\psi_4(-t^2)&=\psi_4(t^2)=\psi_2(t^2)+\phi_2(t^2)=-2+2\(\frac{t}{p}\)a.
\end{align*}
The last equality follows from $\psi_2(t^2)=-2$ (cf. \cite[p. 63]{IR}). Clearly we have
$$n_p(t)=\frac{1}{4}(S_1+S_2+S_3+S_4).$$
By the explicit values of $S_i$ for $i=1,2,3,4$, we get the desired result.
\end{proof}
Recall that $0<a_1<a_2<\cdots<a_{(p-1)/4}<p/2$ are all quadratic residues modulo $p$ in the interval $(0,p/2)$. Now we consider the following product
\begin{equation}\label{equation product ai}
W_p:=\prod_{1\le i<j\le\frac{p-1}{4}}(a_j^2-a_i^2).
\end{equation}
\begin{lemma}\label{lemma congruence of product ai}
Let $p\equiv1\pmod8$ be a prime of the form $a^2+4b^2$ with $a\equiv-1\pmod4$ and $b>0$.
Then we have the following results.

{\rm (i)} If $2$ is a biquadratic residue modulo $p$, then we have
$$W_p\equiv\begin{cases}
(-1)^{\lambda_p+\frac{p-3-2a+8b}{32}}\cdot A_p\pmod p
&\mbox{if}\ p\equiv 9\pmod{16},
\\(-1)^{\lambda_p+\frac{p+29-2a+8b}{32}}\cdot A_p\pmod p&\mbox{if}\ p\equiv 1\pmod{16}.\end{cases}$$

{\rm (ii)} If $2$ is not a biquadratic residue modulo $p$, then we have
$$W_p\equiv\begin{cases}
(-1)^{\lambda_p+\frac{p-3-2a-8b}{32}}\cdot2bB_p/a\pmod p
&\mbox{if}\ p\equiv 9\pmod{16},
\\(-1)^{\lambda_p+\frac{p+29-2a-8b}{32}}\cdot 2bB_p/a\pmod p
&\mbox{if}\ p\equiv 1\pmod{16}.\end{cases}$$
\end{lemma}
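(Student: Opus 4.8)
The plan is to rewrite $W_p$ modulo $p$ as a weighted product of the residues $1,2,\ldots,(p-1)/2$, with exponents supplied by the counting function $N_p$ of Lemma~\ref{lemma Np t}, and then to evaluate that product using the explicit values of $\phi_2$ from Lemma~\ref{lemma phi 2 m}(i) together with the quartic character modulo $p$. Since $0<a_i,a_j<p/2$, we have $a_j\pm a_i\not\equiv0\pmod p$ for $i\ne j$, so no factor $a_j^2-a_i^2$ of $W_p$ is divisible by $p$; and $N_p(t)$ is exactly the number of pairs $i<j$ with $a_j^2-a_i^2\equiv t\pmod p$. Pairing $t$ with $p-t$ and using $N_p(p-t)=N_p(-t)$ and $p-t\equiv-t$,
$$W_p\equiv\prod_{t=1}^{p-1}t^{N_p(t)}\equiv(-1)^{\sum_{t=1}^{(p-1)/2}N_p(-t)}\prod_{t=1}^{(p-1)/2}t^{N_p(t)+N_p(-t)}\pmod p.$$
Because $\sin\frac{2\pi(a_j^2-a_i^2)}{p}<0$ exactly when $\{a_j^2-a_i^2\}_p>p/2$, i.e. when $a_j^2-a_i^2\equiv-t\pmod p$ for some $1\le t\le(p-1)/2$, the exponent $\sum_{t=1}^{(p-1)/2}N_p(-t)$ has the same parity as $\lambda_p$; so the leading sign equals $(-1)^{\lambda_p}$.

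By Lemma~\ref{lemma phi 2 m}(i), $\phi_2(t)$ depends only on $t^{(p-1)/4}\bmod p$, hence so does $N_p(t)+N_p(-t)$ by Lemma~\ref{lemma Np t}; thus this exponent is constant on each of the four classes of $\{1,\ldots,(p-1)/2\}$: on $\Omega_1$ (where $\phi_2(t)=2a$, $\chi_4(t)=1$), on $\Omega_{-1}$ (where $\phi_2(t)=-2a$, $\chi_4(t)=-1$), and on the two classes $\Theta_\pm$ of quadratic non-residues with $t^{(p-1)/4}\equiv\pm2b/a$ (where $\phi_2(t)=\pm4b$). Writing $C_\pm=\prod_{t\in\Theta_\pm}t$ and reading off the exponent values from Lemma~\ref{lemma Np t}, this yields
$$W_p\equiv(-1)^{\lambda_p}\,A_p^{e_1}B_p^{e_{-1}}C_+^{f_+}C_-^{f_-}\pmod p,$$
where $e_1=\frac{p-11+6a}{16}$, $e_{-1}=\frac{p-3-2a}{16}$ and $f_\pm=\frac{p-3-2a\pm8b}{16}$. (Here $p\equiv1\pmod8$ forces $b$ even, so these are integers; also $e_1-e_{-1}=(a-1)/2$ is odd and $f_+-f_-=b$.)

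To eliminate $C_\pm$, write $C_+^{f_+}C_-^{f_-}=(C_+C_-)^{f_-}C_+^{b}$ and use
$$(C_+C_-)^2\equiv1,\qquad C_+^2\equiv(-1)^{(p+7)/8}\cdot 2b/a,\qquad C_-^2\equiv(-1)^{(p-1)/8}\cdot 2b/a\pmod p,$$
the first because the product of all quadratic non-residues equals $1$, the others by computing the product of all $x\in(0,p)$ with a fixed quartic character (such a set is a coset of the group of biquadratic residues, whose product is $-1$). Using the classical fact that $2$ is a biquadratic residue modulo $p=a^2+4b^2$ precisely when $4\mid b$ (equivalently, checking the $2$-adic divisibilities forced by $p=a^2+4b^2$, $a\equiv-1\pmod4$), one reduces $e_1,e_{-1},f_\pm$ modulo $2$, and modulo $4$ when $p\equiv1\pmod{16}$ (where $A_p,B_p$ have order $4$). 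In every case $f_-$ turns out even, so $(C_+C_-)^{f_-}\equiv1$ and the a priori unknown sign of $C_+C_-$ drops out; moreover $C_+^{b}=(C_+^2)^{b/2}\equiv\pm1$ when $\chi_4(2)=1$, while $C_+^{b}\equiv(\pm1)\cdot 2b/a$ when $\chi_4(2)=-1$. Substituting $A_p^2\equiv(-1)^{(p+7)/8}$ and $B_p^2\equiv(-1)^{(p-1)/8}$ then collapses $A_p^{e_1}B_p^{e_{-1}}$ to $(\pm1)A_p$ when $\chi_4(2)=1$ and to $(\pm1)B_p$ when $\chi_4(2)=-1$ (in each sub-case the required parities of $e_1,e_{-1}$ hold). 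Collecting the resulting powers of $-1$ and separating $p\equiv1$ from $p\equiv9\pmod{16}$ gives the four stated formulas.

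The first two reductions above are essentially formal once Lemmas~\ref{lemma phi 2 m} and~\ref{lemma Np t} are in hand; the real work is the elimination of $C_\pm$ and the ensuing sign bookkeeping — showing that the several powers of $-1$ that arise add up to the claimed $\frac{3p\pm\cdots}{32}$-type exponent. This needs $p\bmod 64$ computed from $p=a^2+4b^2$ with $a\equiv-1\pmod4$, the classical quartic residuacity of $2$, and a careful reduction of the large exponents $e_1,e_{-1},f_\pm$ to the appropriate residues; that is the main obstacle.
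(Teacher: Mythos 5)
Your proposal is correct and follows essentially the same route as the paper: the same reduction of $W_p$ to $(-1)^{\lambda_p}\prod_{0<t<p/2}t^{N_p(t)+N_p(-t)}$, the same four exponent classes $\frac{p-11+6a}{16}$, $\frac{p-3-2a}{16}$, $\frac{p-3-2a\pm8b}{16}$ read off from Lemma \ref{lemma Np t}, and the same elimination of the unknown partial products via their squares together with the criterion $\chi_4(2)=1\Leftrightarrow 4\mid b$. The only cosmetic differences are that you compute $C_{\pm}^2$, $A_p^2$, $B_p^2$ by a coset-product argument where the paper factors $x^{(p-1)/4}\mp c\equiv\prod(x-t)(x+t)\pmod p$, and that you regroup the exponents as $(C_+C_-)^{f_-}C_+^{b}$ rather than factoring out $\bigl(\frac{p-1}{2}!\bigr)^{f_-}$; like the paper, you leave the final reduction of the exponents modulo small powers of $2$ as a routine (if tedious) computation.
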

\begin{proof}
We first set
$$d_p:=\#\big\{(x,y): 0<x<y<\frac{p}{2},\(\frac{x}{p}\)=\(\frac{y}{p}\)=1,\{y^2-x^2\}_p>\frac{p}{2}\big\}.$$
It follows from definition that
\begin{align*}
W_p\equiv \prod_{0<t<p}t^{N_p(t)}
\equiv(-1)^{d_p}\prod_{0<t<p/2}t^{N_p(t)+N_p(-t)}\pmod p.
\end{align*}
The last congruence follows from
$$\sum_{0<t<p/2}N_p(-t)=d_p.$$
Since
$$\{y^2-x^2\}_p>p/2\Leftrightarrow\sin\frac{2\pi(y^2-x^2)}{p}<0,$$
we have
$$(-1)^{\lambda_p}=(-1)^{d_p}.$$
Recall that $\Omega_{1}$ and $\Omega_{-1}$ are defined as in Section 1. We also define the following sets.
\begin{align*}
\Omega_R:&=\{0<x<p/2: x^{(p-1)/2}\equiv1\pmod p\},
\\ \Omega_{2b/a}:&=\{0<x<p/2: x^{(p-1)/4}\equiv2b/a\pmod p\},
\\ \Omega_{-2b/a}:&=\{0<x<p/2: x^{(p-1)/4}\equiv-2b/a\pmod p\}.
\end{align*}
By Lemma (\ref{lemma Np t}) the product
$W_p\pmod p$ is equal to
\begin{align*}
(-1)^{\lambda_p}
\prod_{t\in\Omega_1}t^{\frac{p-11+6a}{16}}\prod_{t\in\Omega_{-1}}t^{\frac{p-3-2a}{16}}
\prod_{t\in\Omega_{2b/a}}t^{\frac{p-3-2a+8b}{16}}\prod_{t\in\Omega_{-2b/a}}t^{\frac{p-3-2a-8b}{16}}\pmod p.
\end{align*}
Simplifying the above congruence, we obtain
\begin{align*}
W_p&\equiv(-1)^{\lambda_p}\(\frac{p-1}{2}!\)^{\frac{p-3-2a-8b}{16}}
\prod_{t\in\Omega_1}t^{\frac{a+b-1}{2}}\prod_{t\in\Omega_{-1}}t^{\frac{b}{2}}
\prod_{t\in\Omega_{2b/a}}t^b
\\&\equiv(-1)^{\lambda_p}\(\frac{p-1}{2}!\)^{\frac{p-3-2a-8b}{16}}
\prod_{t\in\Omega_R}t^{\frac{b}{2}}\prod_{t\in\Omega_1}t^{\frac{a-1}{2}}\prod_{t\in\Omega_{2b/a}}t^b\pmod p.
\end{align*}
We claim
\begin{equation}\label{equation Omiga 2b/a}
\prod_{t\in\Omega_{2b/a}}t^2\equiv (-1)^{\frac{p+7}{8}}\cdot2b/a\pmod p,
\end{equation}
and
\begin{equation}\label{equation Omiga 1}
\prod_{t\in\Omega_1}t^2\equiv (-1)^{\frac{p+7}{8}}\pmod p,
\end{equation}
and
\begin{equation}\label{equation Omiga -1}
\prod_{t\in\Omega_{-1}}t^2\equiv (-1)^{\frac{p-1}{8}}\pmod p.
\end{equation}
In fact, we first have
$$x^{\frac{p-1}{4}}-2b/a\equiv \prod_{t\in\Omega_{2b/a}}(x-t)(x+t)\pmod {p}.$$
This gives
$$(-1)^{\frac{p-1}{8}}\prod_{t\in\Omega_{2b/a}}t^2\equiv -2b/a\pmod p.$$
Hence (\ref{equation Omiga 2b/a}) holds. With the same method, (\ref{equation Omiga 1}) follows from
$$x^{\frac{p-1}{4}}-1\equiv \prod_{t\in\Omega_1}(x-t)(x+t)\pmod {p},$$
and (\ref{equation Omiga -1}) follows from
$$x^{\frac{p-1}{4}}+1\equiv \prod_{t\in\Omega_{-1}}(x-t)(x+t)\pmod {p}.$$
Since $p\equiv1\pmod8$, we have $2\mid b$. In view of the above, we obtain
$$W_p\equiv(-1)^{\lambda_p+\frac{p+7}{8}\cdot\frac{a+2b-3}{4}}
\(\frac{p-1}{2}!\)^{\frac{p-3-2a-8b}{16}}(2b/a)^{\frac{b}{2}}
\prod_{t\in\Omega_R}t^{\frac{b}{2}}\prod_{t\in\Omega_1}t\pmod p.$$
It is known that $2$ is a $4$-th power residue modulo $p$ if and only if the equation
$$x^2+64y^2=p$$ has integral solutions. With our notations, this is equivalent to $4\mid b$.
The readers may consult \cite{Cox} for details. Now we divide the remaining proof into two cases.
Recall that $A_p$ and $B_p$ are defined as in Section 1.

{\it Case} I. $2$ is a $4$-th power residue modulo $p$, i.e., $4\mid b$.

In this case, we first have $a\equiv (p-3)/2\pmod {16}$.
By computation we obtain
$$W_p\equiv\begin{cases}
(-1)^{\lambda_p+\frac{p-3-2a+8b}{32}}\cdot A_p\pmod p
&\mbox{if}\ p\equiv 9\pmod{16},
\\(-1)^{\lambda_p+\frac{p+29-2a+8b}{32}}\cdot A_p\pmod p&\mbox{if}\ p\equiv 1\pmod{16}.\end{cases}$$

{\it Case} II. $2$ is not a $4$-th power residue modulo $p$, i.e., $b\equiv2\pmod4$.

In this case, we first have $a\equiv(p+13)/2\pmod {16}$. Via computation we get
$$W_p\equiv\begin{cases}
(-1)^{\lambda_p+\frac{p-3-2a-8b}{32}}\cdot2bB_p/a\pmod p
&\mbox{if}\ p\equiv 9\pmod{16},
\\(-1)^{\lambda_p+\frac{p+29-2a-8b}{32}}\cdot 2bB_p/a\pmod p
&\mbox{if}\ p\equiv 1\pmod{16}.\end{cases}$$
In view of the above, we complete the proof.
\end{proof}
We now turn to some results involving primitive roots modulo $p$. For each $2$-adic integer $n$, the $2$-adic order of $n$ is denoted by $\nu_2(x)$. And we use the symbol $\zeta$ to denote the number
$e^{2\pi\sqrt{-1}/(p-1)}$.
\begin{lemma}\label{lemma product involving roots of unity}
For each positive integer $k$ with $k\mid p-1$. Let $P(x)=\prod_{1\le i<j\le n}(x^{kj}-x^{ki})$, and
let $n=(p-1)/k$. Then we have
$$P(\zeta)=\begin{cases}(-1)^{(n-2)/4}\cdot n^{n/2}&\mbox{if}\ \nu_2(n)=1,
\\(-1)^{(n-4)/4}\cdot\sqrt{-1}\cdot n^{n/2}&\mbox{if}\ \nu_2(n)>1,
\\(-1)^{(n-1)/4}\cdot n^{n/2}&\mbox{if}\ n\equiv1\pmod4,
\\(-1)^{(n+1)/4}\cdot\sqrt{-1}\cdot n^{n/2}&\mbox{if}\ n\equiv3\pmod4.\end{cases}$$
\end{lemma}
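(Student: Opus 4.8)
The plan is to compute $P(\zeta)$ by determining its modulus and its argument separately. First I would set $\omega:=\zeta^k=e^{2\pi\sqrt{-1}/n}$, a primitive $n$-th root of unity, so that
$$P(\zeta)=\prod_{1\le i<j\le n}(\omega^j-\omega^i),$$
and note that $\{\omega,\omega^2,\dots,\omega^n\}$ is precisely the set of all $n$-th roots of unity.

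For the modulus I would use the standard logarithmic-derivative trick applied to $f(x)=x^n-1=\prod_{i=1}^n(x-\omega^i)$. Since $f'(\omega^j)=\prod_{i\ne j}(\omega^j-\omega^i)=n\,\omega^{j(n-1)}=n\,\omega^{-j}$, we get on one hand
$$\prod_{j=1}^n f'(\omega^j)=n^n\,\omega^{-\sum_{j=1}^n j}=n^n\,\omega^{-n(n+1)/2}=(-1)^{n+1}n^n,$$
and on the other hand $\prod_{j=1}^n f'(\omega^j)=\prod_{1\le i<j\le n}(\omega^j-\omega^i)(\omega^i-\omega^j)=(-1)^{\binom n2}P(\zeta)^2$. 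Comparing yields $P(\zeta)^2=(-1)^{n+1-\binom n2}n^n$; in particular $|P(\zeta)|=n^{n/2}$, and this identity will also serve as a consistency check at the end.

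For the argument I would use the elementary factorization
$$\omega^j-\omega^i=e^{\pi\sqrt{-1}(i+j)/n}\cdot 2\sqrt{-1}\,\sin\frac{\pi(j-i)}{n}\qquad(1\le i<j\le n).$$
Since $0<j-i<n$, every sine here is strictly positive, so multiplying over all $\binom n2$ pairs gives
$$P(\zeta)=(2\sqrt{-1})^{\binom n2}\,e^{\pi\sqrt{-1}E/n}\prod_{1\le i<j\le n}\sin\frac{\pi(j-i)}{n},\qquad E:=\sum_{1\le i<j\le n}(i+j).$$
The sine product is a positive real number, hence by the modulus computation it equals $n^{n/2}/2^{\binom n2}$; and a direct count (each $m\in\{1,\dots,n\}$ occurs $n-1$ times in the sum $E$) gives $E=(n-1)\cdot\frac{n(n+1)}2=\frac{n(n^2-1)}2$. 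Substituting, the powers of $2$ cancel and
$$P(\zeta)=\sqrt{-1}^{\,n(n-1)/2}\,e^{\pi\sqrt{-1}(n^2-1)/2}\,n^{n/2}.$$

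It then remains only to simplify the unit $\sqrt{-1}^{\,n(n-1)/2}e^{\pi\sqrt{-1}(n^2-1)/2}$, which I would do by writing $n$ in the forms $4k$, $4k+1$, $4k+2$, $4k+3$ and reducing the two exponents (modulo $4$, resp.\ modulo $8$). For example, when $4\mid n$ one has $\sqrt{-1}^{\,n(n-1)/2}=(-1)^k$ and $e^{\pi\sqrt{-1}(n^2-1)/2}=-\sqrt{-1}$, whose product is $(-1)^{k-1}\sqrt{-1}=(-1)^{(n-4)/4}\sqrt{-1}$, matching the second case of the statement; the other three residue classes are handled in exactly the same way and produce the remaining three cases. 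The only delicate point in the whole argument is this final bookkeeping over the four congruence classes of $n$ modulo $4$; the structural steps — the factorization of $\omega^j-\omega^i$, positivity of the sines, and the $f'(\omega^j)$ identity for the modulus — are routine.
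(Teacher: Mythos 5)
Your proposal is correct and follows essentially the same route as the paper: the modulus is obtained from $P(\zeta)^2$ via the derivative of $x^n-1$ at the roots of unity, and the argument from the factorization $\omega^j-\omega^i=e^{\pi\sqrt{-1}(i+j)/n}\cdot 2\sqrt{-1}\sin\frac{\pi(j-i)}{n}$ together with positivity of the sines, followed by the same bookkeeping over $n\bmod 4$. Your resulting unit $\sqrt{-1}^{\,n(n-1)/2}e^{\pi\sqrt{-1}(n^2-1)/2}$ agrees with the paper's $(-1)^{n(n+1)/2}(\sqrt{-1})^{(n^2-3n-2)/2}$, both being $\sqrt{-1}^{\,(3n^2-n-2)/2}$.
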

\begin{proof}
One can verify the following identities.
\begin{align*}
P(\zeta)^2&=\prod_{1\le i<j\le n}(\zeta^{kj}-\zeta^{ki})^2
=(-1)^{\frac{n(n-1)}{2}}\cdot\prod_{1\le i\ne j\le n}(\zeta^{kj}-\zeta^{ki})
\\&=(-1)^{\frac{n(n-1)}{2}}\prod_{1\le j\le n}\frac{x^n-1}{x-\zeta^{kj}}{\Big|}_{x=\zeta^{kj}}
=(-1)^{\frac{n(n-1)}{2}}\cdot n^n\cdot\prod_{1\le j\le n}\zeta^{-kj}
\\&=(-1)^{\frac{n^2+n+2}{2}}\cdot n^n.
\end{align*}
The last equality follows from the identity $\prod_{1\le j\le n}(x-\zeta^{kj})=x^n-1$.
From the above, it is enough to determine $\Arg(P(\zeta))$, where $\Arg(z)$ denotes the argument of a complex number $z$. Via computation, we have
\begin{align*}
P(\zeta)&=\prod_{1\le i<j\le n}\zeta^{\frac{k(i+j)}{2}}(\zeta^{\frac{k(j-i)}{2}}-\zeta^{\frac{-k(j-i)}{2}})
\\&=\prod_{1\le i<j\le n}\zeta^{\frac{k(i+j)}{2}}\cdot2\sqrt{-1}\cdot\sin\frac{\pi(j-i)}{n}.
\end{align*}
Since $\sin\frac{\pi(j-i)}{n}>0$ for all $1\le i<j\le n$, we have
\begin{align*}
\Arg(P(\zeta))&\equiv \frac{\pi}{2}\cdot\frac{n(n-1)}{2}+\frac{\pi}{n}\cdot\sum_{1\le i<j\le n}(i+j)
\\ &\equiv \frac{\pi}{2}\cdot\frac{n(n-1)}{2}+\frac{\pi}{2n}\cdot\(\sum_{1\le i,j\le n}(i+j)-\sum_{1\le i\le n}2i\)\pmod{2\pi\Z}.
\end{align*}
By computation we obtain that
\begin{equation}\label{equation of argument}
\Arg(P(\zeta))\equiv \pi\cdot\frac{n(n+1)}{2}+\frac{\pi}{2}\cdot\frac{n^2-3n-2}{2}\pmod {2\pi\Z}.
\end{equation}
Then (\ref{equation of argument}) implies
$$P(\zeta)=(-1)^{\frac{n(n+1)}{2}}\cdot(\sqrt{-1})^{\frac{n^2-3n-2}{2}}\cdot n^{\frac{n}{2}}.$$
Now we can easily get the desired result.
\end{proof}
From Lemma \ref{lemma product involving roots of unity}, we immediately obtain the following result.
\begin{lemma}\label{lemma division by cyclotomic polynomial}
Let $G(x)$ be an integral polynomial defined by
$$G(x)=\begin{cases}(-1)^{(n-2)/4}\cdot n^{n/2}&\mbox{if}\ \nu_2(n)=1,
\\(-1)^{(n-4)/4}\cdot n^{n/2}\cdot x^{(p-1)/4}&\mbox{if}\ \nu_2(n)>1.\end{cases}$$
Then $\Phi_{p-1}(x)\mid P(x)-G(x)$.
\end{lemma}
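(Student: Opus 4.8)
The plan is to deduce Lemma \ref{lemma division by cyclotomic polynomial} directly from Lemma \ref{lemma product involving roots of unity} by exploiting that $\zeta = e^{2\pi\sqrt{-1}/(p-1)}$ is a primitive $(p-1)$-th root of unity, hence a root of the cyclotomic polynomial $\Phi_{p-1}(x)$. Observe that $P(x) - G(x)$ is an integral polynomial (note $n = (p-1)/k$, and in the case $\nu_2(n) > 1$ one has $4 \mid p-1$ so that $(p-1)/4$ is a genuine integer exponent, and $G(x)$ as written is a polynomial). Since $\Phi_{p-1}(x)$ is the minimal polynomial of $\zeta$ over $\Q$ and is monic with integer coefficients, to conclude $\Phi_{p-1}(x) \mid P(x) - G(x)$ in $\Z[x]$ it suffices to check that $\zeta$ is a root of $P(x) - G(x)$, i.e. that $P(\zeta) = G(\zeta)$.

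First I would dispose of the two cases of $G$ that actually appear in the statement. When $\nu_2(n) = 1$, Lemma \ref{lemma product involving roots of unity} gives $P(\zeta) = (-1)^{(n-2)/4} \cdot n^{n/2}$, which is exactly $G(\zeta)$ since $G$ is the constant polynomial with that value; so $P(\zeta) - G(\zeta) = 0$. When $\nu_2(n) > 1$, Lemma \ref{lemma product involving roots of unity} gives $P(\zeta) = (-1)^{(n-4)/4} \cdot \sqrt{-1} \cdot n^{n/2}$, and I must match this against $G(\zeta) = (-1)^{(n-4)/4} \cdot n^{n/2} \cdot \zeta^{(p-1)/4}$. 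The key identity here is that $\zeta^{(p-1)/4} = e^{2\pi\sqrt{-1}/4} = \sqrt{-1}$, which holds precisely because $4 \mid p-1$ in this case. Hence $G(\zeta) = (-1)^{(n-4)/4} \cdot n^{n/2} \cdot \sqrt{-1} = P(\zeta)$, so again $\zeta$ is a root of $P(x) - G(x)$.

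Then I would invoke the standard divisibility fact: if $f(x) \in \Z[x]$ and $\zeta$ is a root of $f$, then $\Phi_{p-1}(x) \mid f(x)$ in $\Z[x]$, because $\Phi_{p-1}$ is monic (so division with remainder stays in $\Z[x]$) and it is the minimal polynomial of $\zeta$ over $\Q$. Applying this to $f(x) = P(x) - G(x)$ yields the claim. I would also note explicitly that the remaining two cases of Lemma \ref{lemma product involving roots of unity} — namely $n \equiv 1$ or $3 \pmod 4$ — correspond to $n$ odd, i.e. $\nu_2(n) = 0$, which is incompatible with $k \mid p-1$ and $p \equiv 1 \pmod 8$ being used in the surrounding application where $n$ is even; in any event $G$ is only defined for $\nu_2(n) \geq 1$, so nothing more is needed.

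The only mildly delicate point — and it is not really an obstacle — is bookkeeping around which exponents are integers: one must be sure that $(n-2)/4$, $(n-4)/4$, and $(p-1)/4$ are integers in the relevant cases. For $\nu_2(n) = 1$ we have $n \equiv 2 \pmod 4$, so $(n-2)/4 \in \Z$; for $\nu_2(n) > 1$ we have $4 \mid n$, so $(n-4)/4 \in \Z$ and moreover $4 \mid n \mid p-1$, so $(p-1)/4 \in \Z$ as well. With these observations the proof is essentially immediate from the previous lemma, so I expect no real difficulty — the content has already been absorbed into the computation of $\Arg(P(\zeta))$.
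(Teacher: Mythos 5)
Your proposal is correct and follows exactly the paper's approach: the paper likewise reduces the claim to verifying $P(\zeta)=G(\zeta)$ and then cites Lemma \ref{lemma product involving roots of unity}, leaving the identification $\zeta^{(p-1)/4}=\sqrt{-1}$ and the minimal-polynomial divisibility argument implicit. Your write-up simply makes those routine details explicit.
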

\begin{proof}
It is sufficient to show $P(\zeta)=G(\zeta)$. Hence our desired result follows from Lemma \ref{lemma product involving roots of unity}.
\end{proof}
Now we are in the position to prove our main result.

\noindent{\bf Proof of Theorem \ref{theorem A}.} It follows from definition that
$$\sgn(\tau_p(g))=\prod_{1\le i<j\le \frac{p-1}{4}}\frac{\{a_j^2\}_p-\{a_i^2\}_p}{\{g^{4j}\}_p-\{g^{4i}\}_p}
\equiv\frac{W_p}{\prod_{1\le i<j\le \frac{p-1}{4}}(g^{4j}-g^{4i})}\pmod p.$$
By Lemma \ref{lemma Np t} we get the explicit value of $W_p\pmod p.$

Now we turn to the
denominator. By \cite[Proposition 10.3, p.61]{N} we see that
$p$ totally splits in $\Q(\zeta)$. Hence by the Kummer Theorem (cf. \cite[Proposition 8.3, p.47]{N}) we know that
$\Phi_{p-1}(x)\pmod {p}$ also totally splits in $\Z/p\Z[x]$.
And the set of primitive $(p-1)$-th roots of unity of $\Q(\zeta)$ maps bijectively onto the set of primitive $(p-1)$-th roots of
unity of $(\Z/p\Z)^{\times}$. We therefore have
$$\Phi_{p-1}(x)\equiv \prod_{g\in\mathcal{P}}(x-g)\pmod {p},$$
where
$$\mathcal{P}=\{0<g<p: \text{$g$ is a primitive root modulo $p$}\}.$$
By Lemma \ref{lemma division by cyclotomic polynomial} we have
$$\prod_{1\le i<j\le\frac{p-1}{4}}(g^{4j}-g^{4i})\equiv G(g)\pmod p.$$
Via computation, we obtain
$$\prod_{1\le i<j\le\frac{p-1}{4}}(g^{4j}-g^{4i})\equiv
\begin{cases}(-1)^{\lfloor\frac{p}{16}\rfloor+1}\cdot\chi_4(2)\pmod p&\mbox{if}\ p\equiv9\pmod{16},
\\(-1)^{\lfloor\frac{p}{16}\rfloor}\cdot\chi_4(2)\cdot g^{\frac{p-1}{4}}\pmod p&\mbox{if}\ p\equiv1\pmod{16},\end{cases}$$
where $\lfloor\cdot\rfloor$ denotes the floor function. By Lemma \ref{lemma Np t} and the above, we can get the explicit value of $\sgn(\tau_p(g))$.

When $p\equiv9\pmod {16}$,
it is clear that $\sgn(\tau_p(g))$ is independent on the choice of $g$.

In the case $p\equiv1\pmod{16}$, we
claim that for each $g\in\mathcal{P}$ we have
$$\sgn(\tau_p(g))\cdot\sgn(\tau_p(g^{-1}))=-1,$$\
where $g^{-1}\in\mathcal{P}$ satisfies $g\cdot g^{-1}\equiv1\pmod p$.
In fact, we have
\begin{align*}
\sgn(\tau_{p}(g))\cdot\sgn(\tau_{p}(g^{-1}))&\equiv
\prod_{1\le i<j\le \frac{p-1}{4}}\frac{g^{4j}-g^{4i}}{g^{-4j}-g^{-4i}}
\equiv\frac{G(g)}{G(g^{-1})}\\&\equiv g^{\frac{p-1}{2}}\equiv-1\pmod p.
\end{align*}
From this we obtain
$$\#\{g\in\mathcal{P}:\sgn(\tau_p(g))=1\}=\#\{g\in\mathcal{P}:\sgn(\tau_p(g))=-1\}.$$
In view of the above, we complete the proof.\qed
\maketitle
\section{Proof of Theorem \ref{theorem B}}
\setcounter{lemma}{0}
\setcounter{theorem}{0}
\setcounter{corollary}{0}
\setcounter{remark}{0}
\setcounter{equation}{0}
\setcounter{conjecture}{0}
We begin with the following Lemma.
\begin{lemma}\label{lemma A m}
For each $1\le m\le p-1$, we let
$$A_m:=\big\{1\le x\le p-1: \(\frac{x}{p}\)=\(\frac{x+m}{p}\)=1\big\}.$$
Then we have
$$\#A_m=\(p-3-2\(\frac{m}{p}\)\)/4.$$
\end{lemma}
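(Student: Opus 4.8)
The plan is to turn the count $\#A_m$ into a character sum and evaluate it termwise. Recall that for $x\not\equiv0\pmod p$ the quantity $\tfrac12\left(1+\left(\frac{x}{p}\right)\right)$ is the indicator of the event ``$x$ is a quadratic residue modulo $p$''. Since the conditions $\left(\frac{x}{p}\right)=\left(\frac{x+m}{p}\right)=1$ force both $x\not\equiv0$ and $x\not\equiv-m\pmod p$, we have
$$4\,\#A_m=\sum_{\substack{1\le x\le p-1\\ x\not\equiv-m\pmod p}}\left(1+\left(\frac{x}{p}\right)\right)\left(1+\left(\frac{x+m}{p}\right)\right).$$
The excluded residue $x\equiv-m$ would contribute $\left(1+\left(\frac{-m}{p}\right)\right)(1+0)=1+\left(\frac{-m}{p}\right)$ (note that here the second factor is $1$, not $0$), so it is harmless to add that term back and subtract it, giving
$$4\,\#A_m=S-1-\left(\frac{-m}{p}\right),\qquad S:=\sum_{x=1}^{p-1}\left(1+\left(\frac{x}{p}\right)\right)\left(1+\left(\frac{x+m}{p}\right)\right).$$

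Next I would expand $S$ into four sums. We have $\sum_{x=1}^{p-1}1=p-1$ and $\sum_{x=1}^{p-1}\left(\frac{x}{p}\right)=0$. For the third sum, as $x$ runs over $\F_p\setminus\{0\}$ the value $x+m$ runs over $\F_p\setminus\{m\}$, so $\sum_{x=1}^{p-1}\left(\frac{x+m}{p}\right)=-\left(\frac{m}{p}\right)$. For the fourth, writing $x(x+m)\equiv x^2\bigl(1+mx^{-1}\bigr)$ we get $\left(\frac{x(x+m)}{p}\right)=\left(\frac{1+mx^{-1}}{p}\right)$, and since $x\mapsto 1+mx^{-1}$ is a bijection from $\F_p\setminus\{0\}$ onto $\F_p\setminus\{1\}$ we obtain $\sum_{x=1}^{p-1}\left(\frac{x(x+m)}{p}\right)=-\left(\frac{1}{p}\right)=-1$. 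Collecting these, $S=(p-1)+0-\left(\frac{m}{p}\right)-1=p-2-\left(\frac{m}{p}\right)$.

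Finally I would combine the two displays: $4\,\#A_m=p-3-\left(\frac{m}{p}\right)-\left(\frac{-m}{p}\right)$. Since we are in the situation $p\equiv1\pmod8$, in particular $p\equiv1\pmod4$, so $\left(\frac{-1}{p}\right)=1$ and hence $\left(\frac{-m}{p}\right)=\left(\frac{m}{p}\right)$; therefore $4\,\#A_m=p-3-2\left(\frac{m}{p}\right)$, which is the claimed identity.

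This is a routine computation with no real obstacle; the only points that need attention are the exceptional residue $x\equiv-m\pmod p$, where the factor $1+\left(\frac{x+m}{p}\right)$ equals $1$ rather than $0$ and so must be removed from the naive sum, and the use of $p\equiv1\pmod4$ to identify $\left(\frac{-m}{p}\right)$ with $\left(\frac{m}{p}\right)$ (which is also exactly what makes the right-hand side an integer).
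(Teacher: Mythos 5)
Your proof is correct and follows essentially the same route as the paper: both expand the indicator of ``$x$ and $x+m$ are quadratic residues'' into a product of $\frac12\bigl(1+\bigl(\frac{\cdot}{p}\bigr)\bigr)$ factors, evaluate the resulting character sums (including $\sum_x\bigl(\frac{x^2+mx}{p}\bigr)=-1$, which you rederive and the paper cites from Ireland--Rosen), and use $p\equiv1\pmod4$ to identify $\bigl(\frac{-m}{p}\bigr)$ with $\bigl(\frac{m}{p}\bigr)$. The only cosmetic difference is that you add back and subtract the excluded term at $x\equiv-m$ while the paper keeps the sum restricted throughout.
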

\begin{proof}
It follows from definition that
\begin{align*}
\#A_m&=\frac{1}{4}\sum\limits_{\substack{0<x<p\\x\not\equiv -m\pmod p}}
\(1+\(\frac{x}{p}\)\)\(1+\(\frac{x+m}{p}\)\)
\\&=\frac{1}{4}\(p-2-2\(\frac{m}{p}\)+\sum_{0<x<p}\(\frac{x^2+mx}{p}\)\)
\\&=\frac{1}{4}\(p-3-2\(\frac{m}{p}\)\).
\end{align*}
The last equality follows from (cf. \cite[Exercise 8, p. 63]{IR})
$$\sum_{0<x<p}\(\frac{x^2+mx}{p}\)=-1.$$
\end{proof}
\begin{lemma}\label{lemma sum involving rational 4-th residues}
Let $A_m$ be defined as in Lemma \ref{lemma A m}. Let $p\equiv1\pmod8$ be a prime of the form $a^2+4b^2$ with $a\equiv-1\pmod4$ and $b>0$. Then we have
$$\sum_{x\in A_m}\(\frac{x^2+mx}{p}\)_4=\frac{1}{2}\(-1+a\(\frac{m}{p}\)\).$$
\end{lemma}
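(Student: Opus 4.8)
The plan is to rewrite the sum in terms of a quartic character and then evaluate the resulting character sums as Jacobi sums. Fix a character $\chi$ of $(\Z/p\Z)^{\times}$ of order $4$, so that $\chi^2$ equals the Legendre symbol $(\frac{\cdot}{p})$; since $p\equiv1\pmod 8$ we have $4\mid\frac{p-1}{2}$, hence $\chi(-1)=1$. The starting observation is that for every $x\in A_m$ the element $x(x+m)$ is a nonzero quadratic residue, so the rational quartic symbol agrees there with the genuine quartic character:
$$\left(\frac{x^2+mx}{p}\right)_4=\chi\bigl(x(x+m)\bigr)=\chi(x)\chi(x+m)\qquad(x\in A_m).$$

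Next I would insert the characteristic function of $A_m$. For $x\not\equiv 0,-m\pmod p$ the quantity $\frac14\bigl(1+\chi^2(x)\bigr)\bigl(1+\chi^2(x+m)\bigr)$ is the indicator of $A_m$, while at $x\equiv 0,-m$ the factor $\chi(x)\chi(x+m)$ above vanishes; hence
$$\sum_{x\in A_m}\left(\frac{x^2+mx}{p}\right)_4=\frac14\sum_{x\in\F_p}\bigl(\chi(x)+\chi^3(x)\bigr)\bigl(\chi(x+m)+\chi^3(x+m)\bigr).$$
Expanding gives the four sums $T_{a,b}:=\sum_{x\in\F_p}\chi^a(x)\chi^b(x+m)$ with $a,b\in\{1,3\}$; after scaling the variable by $m$ and by $-1$ each becomes $T_{a,b}=\chi^{a+b}(m)\,\chi^a(-1)\,J(\chi^a,\chi^b)$, where $J$ is the Jacobi sum. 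Using $\chi(-1)=1$ together with $\chi^2=(\frac{\cdot}{p})$ and $J(\chi^3,\chi^3)=\overline{J(\chi,\chi)}$, the two diagonal terms give $T_{1,1}=(\frac{m}{p})J(\chi,\chi)$ and $T_{3,3}=(\frac{m}{p})\overline{J(\chi,\chi)}$; the two mixed terms involve $\chi^4=1$ and the degenerate Jacobi sum $J(\chi,\chi^{-1})=-\chi(-1)=-1$, so $T_{1,3}=T_{3,1}=-1$. Adding the four contributions,
$$\sum_{x\in A_m}\left(\frac{x^2+mx}{p}\right)_4=\frac14\left(2\left(\frac{m}{p}\right)\mathrm{Re}\,J(\chi,\chi)-2\right)=\frac12\left(\left(\frac{m}{p}\right)\mathrm{Re}\,J(\chi,\chi)-1\right).$$

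Finally I would invoke the classical evaluation of the quartic Jacobi sum: with the normalization $p=a^2+4b^2$, $a\equiv-1\pmod 4$, one has $J(\chi,\chi)=a+2b\sqrt{-1}$ up to complex conjugation, so $\mathrm{Re}\,J(\chi,\chi)=a$ (see \cite[Chapter~8]{IR} or \cite{BEW}); here the hypothesis $p\equiv1\pmod 8$ is exactly what makes the standard normalization of this formula coincide with the paper's convention $a\equiv-1\pmod 4$, because $p\equiv1\pmod8$ forces $2\mid b$ and hence $4\mid 2b$. Substituting $\mathrm{Re}\,J(\chi,\chi)=a$ into the displayed identity yields $\frac12\bigl(-1+a(\frac{m}{p})\bigr)$, as desired. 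The routine points are the bookkeeping at the excluded residues $x\equiv0,-m$ and the signs in the two substitutions; the only genuinely external ingredient — and the step I would be most careful about — is matching the sign normalization of $\mathrm{Re}\,J(\chi,\chi)=a$ to the paper's convention for $a$. If one prefers to stay inside the Jacobsthal-sum framework of the paper, one can instead obtain $\mathrm{Re}\,J(\chi,\chi)=\tfrac12\phi_2(1)=a$ from Lemma \ref{lemma phi 2 m}(i) together with the standard link between $\phi_2(1)$ and $J(\chi,\chi)$.
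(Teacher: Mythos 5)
Your proof is correct, but it takes a genuinely different route from the paper's. The paper disposes of this lemma in two lines: it rewrites the sum as $\frac14 S_4$, where $S_4=\sum_{x,y}(\frac{xy}{p})(1-(\frac{y^2-x^2-m}{p})^2)$ is the double sum already evaluated inside the proof of Lemma \ref{lemma Np t} --- there a linear change of variables turns $S_4$ into the Jacobsthal sum $\psi_4(-m^2)$, which Lemma \ref{lemma phi 2 m} reduces to $\psi_2(m^2)+\phi_2(m^2)=-2+2a(\frac{m}{p})$. You instead expand the indicator of $A_m$ with a quartic character and land on the Jacobi sums $J(\chi,\chi)$, $J(\chi^3,\chi^3)$ and the degenerate $J(\chi,\chi^{-1})=-\chi(-1)$; your intermediate steps all check out: $\chi(-1)=1$ because $8\mid p-1$, the two substitutions giving $T_{a,b}=\chi^{a+b}(m)\chi^a(-1)J(\chi^a,\chi^b)$, and the harmless vanishing at $x\equiv 0,-m$. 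The two arguments are classical cousins, since $\phi_2$ and $J(\chi,\chi)$ encode the same decomposition $p=a^2+4b^2$; the paper's version buys economy by reusing $S_4$ verbatim, while yours is self-contained given the standard evaluation of the quartic Jacobi sum. You also correctly isolated the one delicate point, namely matching $\mathrm{Re}\,J(\chi,\chi)=a$ to the convention $a\equiv-1\pmod 4$: for $p\equiv1\pmod 8$ the even part of $J(\chi,\chi)$ is divisible by $4$, which together with the congruence $J(\chi,\chi)\equiv-1$ modulo $2(1+i)$ forces the real part to be $\equiv-1\pmod 4$, and your fallback of deducing $\mathrm{Re}\,J(\chi,\chi)=\frac12\phi_2(1)=a$ from Lemma \ref{lemma phi 2 m}(i) is precisely the bridge between your computation and the paper's.
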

\begin{proof}
It is easy to see that
\begin{align*}
\sum_{x\in A_m}\(\frac{x^2+mx}{p}\)_4&=\frac{1}{4}\sum_{x=0}^{p-1}\sum_{y=0}^{p-1}
\(\frac{x^2y^2}{p}\)_4\(1-\(\frac{y^2-x^2-m}{p}\)^2\)
\\&=\frac{1}{4}\sum_{x=0}^{p-1}\sum_{y=0}^{p-1}
\(\frac{xy}{p}\)\(1-\(\frac{y^2-x^2-m}{p}\)^2\)=\frac{1}{4}S_4,
\end{align*}
where $S_4$ is defined as in the proof of Lemma \ref{lemma Np t}. Then our desired result follows from
the proof of Lemma \ref{lemma Np t}.
\end{proof}

We also need the following result.
\begin{lemma}\label{lemma 4-th residues in A m}
Let $A_m$ be defined as in Lemma \ref{lemma A m}, and let $p\equiv1\pmod8$ be a prime of the form
$a^2+4b^2$ with $a\equiv-1\pmod4$ and $b>0$. Then we have the following results.

{\rm (i)} If $(\frac{m}{p})=-1$, then
$$\#\big\{x\in A_m:\ \(\frac{x}{p}\)_4=1\big\}=\begin{cases}(p-1+4b)/8&\mbox{if}\ m^{\frac{p-1}{4}}\equiv 2b/a\pmod p,\\(p-1-4b)/8&\mbox{if}\ m^{\frac{p-1}{4}}\equiv -2b/a\pmod p.\end{cases}$$

{\rm (ii)} If $(\frac{m}{p})=1$, then
$$\#\big\{x\in A_m:\ \(\frac{x}{p}\)_4=1\big\}=\begin{cases}(p-7+2a)/8&\mbox{if}\ \chi_4(m)=1,\\(p-3-2a)/8&\mbox{if}\ \chi_4(m)=-1.\end{cases}$$
\end{lemma}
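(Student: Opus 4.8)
The plan is to eliminate the biquadratic condition by introducing square roots. Since $p\equiv1\pmod8$, we have $\left(\frac{-1}{p}\right)=1$ and $\chi_4(-1)=1$; hence a nonzero residue $x$ is a biquadratic residue precisely when $x\equiv u^2\pmod p$ for some quadratic residue $u$, and in that case \emph{both} square roots of $x$ are quadratic residues. Likewise $x+m$ is a quadratic residue exactly when $x+m\equiv v^2\pmod p$ for some $v\not\equiv0\pmod p$. This suggests introducing
$$M:=\#\left\{(u,v)\in\F_p\times\F_p:\ v^2-u^2\equiv m\pmod p,\ \left(\tfrac{u}{p}\right)=1\right\}$$
and recovering $\#\{x\in A_m:\chi_4(x)=1\}$ from $M$.

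Next I would analyse the map $(u,v)\mapsto x:=u^2$ on the pairs counted by $M$. The constraint $\left(\frac{u}{p}\right)=1$ already forces $u\not\equiv0$. Such a pair has $v\equiv0$ only when $u^2\equiv-m$, and a solution with $\left(\frac{u}{p}\right)=1$ exists exactly when $\left(\frac{m}{p}\right)=1$ and $\chi_4(m)=1$ (using $\chi_4(-m)=\chi_4(m)$), in which case there are precisely two such pairs. For the pairs with $u,v$ both nonzero, one checks that $x=u^2$ lies in $A_m$, that each biquadratic residue $x\in A_m$ is the image of exactly four such pairs (two quadratic-residue square roots of $x$, and two square roots of $x+m$), and that each $x\in A_m$ which is \emph{not} a biquadratic residue is the image of none. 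Therefore
$$M=4\,\#\{x\in A_m:\chi_4(x)=1\}+2\eta_m,\qquad \eta_m:=\begin{cases}1&\text{if }\left(\tfrac{m}{p}\right)=1\text{ and }\chi_4(m)=1,\\ 0&\text{otherwise.}\end{cases}$$

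It then remains to evaluate $M$ by a routine character-sum computation. Writing $M=\sum_{(\frac{u}{p})=1}\big(1+\left(\tfrac{u^2+m}{p}\right)\big)=\tfrac{p-1}{2}+\tfrac12\big(\psi_2(m)+\phi_2(m)\big)$ and using $\psi_2(m)=\sum_{u=1}^{p-1}\left(\tfrac{u^2+m}{p}\right)=-1-\left(\tfrac{m}{p}\right)$ (which follows from $\sum_{u=0}^{p-1}\left(\tfrac{u^2+m}{p}\right)=-1$, cf.\ \cite[p.~63]{IR}), we obtain $M=\tfrac12\big(p-2-\left(\tfrac{m}{p}\right)+\phi_2(m)\big)$, so that
$$\#\{x\in A_m:\chi_4(x)=1\}=\frac{p-2-\left(\frac{m}{p}\right)+\phi_2(m)-4\eta_m}{8}.$$
Inserting the explicit value of $\phi_2(m)$ from Lemma \ref{lemma phi 2 m}(i) and splitting into the cases $\left(\tfrac{m}{p}\right)=-1$ (where $\eta_m=0$ and $\phi_2(m)=\pm4b$ according as $m^{(p-1)/4}\equiv\pm2b/a$) and $\left(\tfrac{m}{p}\right)=1$ (where $\chi_4(m)=\pm1$, $\phi_2(m)=\pm2a$, and $\eta_m$ equals $1$ resp.\ $0$) yields exactly the four asserted values.

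The only genuinely delicate point is the combinatorial bookkeeping in the second step: checking that $(u,v)\mapsto u^2$ is exactly four-to-one away from the locus $v\equiv0$, and identifying the correction term $\eta_m$ coming from the solutions with $v\equiv0$; the hypothesis $p\equiv1\pmod8$ enters here precisely through $\left(\frac{-1}{p}\right)=\chi_4(-1)=1$. As a built-in consistency check, the right-hand sides are integers: $p-7+2a\equiv0\pmod8$ because $a\equiv-1\pmod4$, and $p-1\pm4b\equiv0\pmod8$ because $2\mid b$ when $p\equiv1\pmod8$.
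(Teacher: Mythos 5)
Your argument is correct and is essentially the paper's own: both reduce the count to the character sum $\tfrac{p-1}{2}+\tfrac12\big(\psi_2(m)+\phi_2(m)\big)$ and then invoke Lemma \ref{lemma phi 2 m}, the only difference being bookkeeping (you count pairs $(u,v)$ on $v^2-u^2\equiv m$ and divide by $4$ with the correction term $\eta_m$ for $v\equiv 0$, while the paper sums the indicator $\tfrac18\big(1+(\tfrac{x^2}{p})_4\big)\big(1+(\tfrac{x^2+m}{p})\big)$ over $x$, excluding $x\equiv\pm m_0$ when $(\tfrac{m}{p})=1$). The degenerate-locus analysis via $\chi_4(-m)=\chi_4(m)$ matches the paper's $-2(\tfrac{m}{p})_4$ term, and all four final values agree.
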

\begin{proof}
We divide our proof into the following two cases.

{\rm (i)} $(\frac{m}{p})=-1$. It is easy to see that
\begin{align*}
\#\big\{x\in A_m: \(\frac{x}{p}\)_4=1\big\}&=\frac{1}{8}\sum_{x=1}^{p-1}
\(\(\frac{x^2}{p}\)_4+1\)\(\(\frac{x^2+m}{p}\)+1\)
\\&=\frac{1}{8}(p-1+\phi_2(m)).
\end{align*}
Then our desired result follows from Lemma \ref{lemma phi 2 m}.

{\rm (ii)} $(\frac{m}{p})=1$. Here we set $m_0^2\equiv -m\pmod p$. It is easy to see that
\begin{align*}
\#\big\{x\in A_m: \(\frac{x}{p}\)_4=1\big\}&=
\sum\limits_{\substack{1\le x\le p-1\\ x\not\equiv\pm m_0\pmod p}}
\frac{1}{8}\(1+\(\frac{x^2}{p}\)_4\)\(1+\(\frac{x^2+m}{p}\)\)
\\&=\frac{1}{8}\(p-5-2\(\frac{m}{p}\)_4+\phi_2(m)\).
\end{align*}
Then Lemma \ref{lemma phi 2 m} implies our desired result.
\end{proof}
We are now in the position to prove our main result.

{\bf Proof of Theorem \ref{theorem B}}. For all $1\le m\le p-1$, we first let
$$L_m:=\big\{1\le x\le p-1-m: \(\frac{x}{p}\)=\(\frac{x+m}{p}\)=1\big\}.$$
We also let
\begin{align*}
r_m^{++}&=\#\big\{x\in L_m: \(\frac{x}{p}\)_4=\(\frac{x+m}{p}\)_4=1\big\},
\\r_m^{--}&=\#\big\{x\in L_m: \(\frac{x}{p}\)_4=\(\frac{x+m}{p}\)_4=-1\big\},
\\r_m^{+-}&=\#\big\{x\in L_m: \(\frac{x}{p}\)_4=1,\(\frac{x+m}{p}\)_4=-1\big\},
\\r_m^{-+}&=\#\big\{x\in L_m: \(\frac{x}{p}\)_4=-1,\(\frac{x+m}{p}\)_4=1\big\}.
\end{align*}
Using the map $x\mapsto p-m-x$, it is easy to see that
$$r_m^{+-}=r_m^{-+}.$$
Clearly we have the following equalities:
\begin{align*}
r_m^{++}+r_m^{--}+r_m^{+-}+r_m^{-+}&=\#L_m,
\\r_m^{++}+r_m^{--}-r_m^{+-}-r_m^{-+}&=\sum_{x\in L_m}\(\frac{x^2+mx}{p}\)_4,
\\r_m^{++}+r_m^{+-}&=\#\big\{x\in L_m: \(\frac{x}{p}\)_4=1\big\},
\\r_m^{--}+r_m^{-+}&=\#L_m-\#\big\{x\in L_m: \(\frac{x}{p}\)_4=1\big\}.
\end{align*}
From the above, by computation we obtain
\begin{equation}\label{equation r m ++}
r_m^{++}=\#\big\{x\in L_m: \(\frac{x}{p}\)_4=1\big\}-\frac{1}{4}\#L_m+\frac{1}{4}\sum_{x\in L_m}\(\frac{x^2+mx}{p}\)_4.
\end{equation}
Replacing $m$ by $p-m$, we get
\begin{equation*}
r_{p-m}^{++}=\#\big\{x\in L_{p-m}: \(\frac{x}{p}\)_4=1\big\}-\frac{1}{4}\#L_{p-m}+\frac{1}{4}\sum_{x\in L_{p-m}}\(\frac{x^2-mx}{p}\)_4.
\end{equation*}
Using the map $x\mapsto p-x$, we have
$$\#L_{p-m}=\#\big\{p+1-m\le x\le p-1: \(\frac{x}{p}\)=\(\frac{x+m}{p}\)=1\big\}.$$
We therefore get
$$\#L_m+\#L_{p-m}=\#A_m=\(p-3-2\(\frac{m}{p}\)\)/4.$$
The last equality follows from Lemma \ref{lemma A m}.
Using the same method, we also have
$$\sum_{x\in L_m}\(\frac{x^2+mx}{p}\)_4+\sum_{x\in L_{p-m}}\(\frac{x^2-mx}{p}\)_4=
\sum_{x\in A_m}\(\frac{x^2+mx}{p}\)_4.$$
From Lemma \ref{lemma sum involving rational 4-th residues} we have
$$\sum_{x\in L_m}\(\frac{x^2+mx}{p}\)_4+\sum_{x\in L_{p-m}}\(\frac{x^2-mx}{p}\)_4=
\frac{1}{2}\(-1+a\(\frac{m}{p}\)\).$$
As $-1$ is a $4$-th power residue modulo $p$, we can verify that
$$\#\big\{x\in A_m: \(\frac{x}{p}\)_4=1\big\}$$
is equal to
$$\#\big\{x\in L_m: \(\frac{x}{p}\)_4=1\}+\#\{x\in L_{p-m}: \(\frac{x}{p}\)_4=1\big\}.$$
By Lemma \ref{lemma 4-th residues in A m} we can easily get the explicit value of
$$\#\big\{x\in L_m: \(\frac{x}{p}\)_4=1\big\}+\#\big\{x\in L_{p-m}: \(\frac{x}{p}\)_4=1\big\}.$$
Let $b_i$ be defined as in (\ref{sequence F}) for $i=1,\cdots,(p-1)/4$. We clearly have
$$S_p:=\prod_{1\le i<j<\frac{p-1}{4}}(b_j-b_i)\equiv (-1)^{\sum_{0<m<p/2}r_{p-m}^{++}}\prod_{0<m<p/2}m^{r_m^{++}+r_{p-m}^{++}}\pmod p.$$
It is easy to verify that
\begin{align*}
\sum_{0<m<p/2}r_{p-m}^{++}&=\sum_{0<m<p/2}\#\big\{1\le x\le m-1: \(\frac{x}{p}\)_4=\(\frac{m-x}{p}\)_4=1\big\}
\\&=\varepsilon_p.
\end{align*}
In view of the above, via computation, we obtain
$$S_p\equiv(-1)^{\ve_p}\(\frac{p-1}{2}!\)^{\frac{p-3-2a-8b}{16}}
\prod_{m\in\Omega_R}m^{\frac{b}{2}}\prod_{m\in\Omega_1}m^{\frac{a-1}{2}}\prod_{m\in\Omega_{2b/a}}m^b\pmod p.$$
It follows from definition that
$$\sgn(\rho_p)\equiv \frac{S_p}{W_p}\pmod p,$$
where $W_p$ is defined as in (\ref{equation product ai}). Recall that in the proof of Lemma \ref{lemma congruence of product ai} we obtain
$$W_p\equiv(-1)^{\lambda_p}\(\frac{p-1}{2}!\)^{\frac{p-3-2a-8b}{16}}
\prod_{t\in\Omega_R}t^{\frac{b}{2}}\prod_{t\in\Omega_1}t^{\frac{a-1}{2}}\prod_{t\in\Omega_{2b/a}}t^b\pmod p.$$
Hence we finally obtain
$$\sgn(\rho_p)=(-1)^{\lambda_p+\ve_p}.$$
This completes the proof.\qed

\Ack\ We would like to thank the referees for their helpful comments.
This research was supported by the National Natural Science Foundation of
China (Grant No. 11971222).

\end{document}